\theoremstyle{plain}
\newtheorem{theorem}{Theorem}
\newtheorem{lemma}{Lemma}
\newtheorem{proposition}{Proposition}
\newtheorem{corollary}{Corollary}
\newtheorem{question}{Question}
\theoremstyle{definition}
\newtheorem{conjecture}{Conjecture}
\newtheorem{example}{Example}
\theoremstyle{remark}
\newtheorem{remark}{Remark}
\newtheorem*{problem*}{Problem}
\newcommand{\R}{\mathbb R}
\title[On the number of vertices of projective polytopes]{On the number of vertices of projective polytopes}
\thanks{$^1$ Supported by  LAISLA (CONACYT-CNRS-UNAM).$^3$ Partially supported by Program MATH AmSud, Grant MATHAMSUD 18-MATH-01, Project FLaNASAGraTA and by INSMI-CNRS}
\date{\today}
\author[N. Garc\'{i}a-Col\'{i}n]{Natalia Garc\'{i}a-Col\'{i}n $^1$}
\address{$^1$ CONACYT Research Fellow - INFOTEC Centro de Investigación en Tecnologías de la Información y Comunicación, Mexico}
\email{ natalia.garcia@infotec.mx}
\author[L.P. Montejano]{Luis Pedro Montejano $^2$}
\address{$^2$ Serra H\'unter Fellow, Universitat Rovira i Virgili, Departament d'Enginyeria Inform\`{a}tica i Matem\`{a}tiques\\ Av. Pa\"{i}sos Catalans 26, 43007 Tarragona, Spain}
\email{ luispedro.montejano@urv.cat}
\author[J.L. Ram\'{i}rez Alfons\'{i}n]{Jorge Luis Ram\'{i}rez Alfons\'{i}n $^3$ }
\address{$^3$ IMAG, Univ. Montpellier, CNRS, Montpellier, France and UMI2924 - Jean-Christophe Yoccoz, CNRS-IMPA}
\email{ jorge.ramirez-alfonsin@umontpellier.fr}
\begin{document}
\begin{abstract}
Let $X$ be a configuration of $n$ points in $\R^d$.

What is the maximum number of vertices that $conv(T(X))$ can have among all the possible permissible projective transformations $T$?

 In this paper, we investigate this and connected questions. After presenting several upper bounds, we study a closely related problem (via Gale transforms) concerning the number of minimal Radon partitions of a set of points. We then present some bounds for this number that enable us to partially answer a question due to Pach and Szegedy. We also discuss another related problem concerning the size of topes in arrangements of hyperplanes.
\end{abstract}

\maketitle
\section{Introduction}

Consider the following question

\begin{quote}
Given a set of $n$ points in general position $X \subset \R^d$, what is the maximum number of $k$-faces that $conv(T(X))$ can have among all the possible {\em permissible projective transformations} $T$?
\end{quote}

More precisely, recall that a \emph{projective transformation} $T:\mathbb{R}^{d} \rightarrow \mathbb{R}^{d}$ is a function such that $T(x)=\frac{Ax+b}{\langle c,x \rangle + \delta}$, where $A$ is a linear transformation of $\mathbb{R}^{d}$, $b, c \in \mathbb{R}^{d}$ and $\delta \in \mathbb{R},$ is such that at least one of $c\neq 0$ or $\delta \neq 0.$  $T$ is said to be \emph{permissible} for a set $X\subset \mathbb{R}^{d}$ if and only if  $\langle c,x \rangle + \delta \neq 0$ for all $x\in X$.

Let $d\ge k\ge 0$ be integers and let $X\subset \R^d$ be a set of points in general position, we define the number of \emph{projective $k$--faces} of $X$ as
\begin{equation}\label{eq:def-h}
h_k(X,d)= \max\limits_T \left\{f_k(conv(T(X)))\right\},
\end{equation}
where the maximum is taken over all possible permissible projective transformations $T$ of $X$ and $f_k(P)$ denotes the number of $k$-faces of a polytope $P$.

Now we can define the function $H_k(n,d)$ which determines the \emph{maximum} number of {\em projective $k$-faces} that \emph{any} $X$ configuration of $n$ points in $\R^d$ must have as,
$$H_k(n,d)=\min\limits_{X \subset \R^d, |X|=n}\left\{h_k(X,d)\right\}.$$

In this paper, we focus our attention on the behavior of $H_{0}(n,d)$ (the number of projective vertices).
It turns out that $H_{0}(n,d)$ is the source of several applications.

\subsection{Scope/general interest}
The function $H_{0}(n,d)$ is closely connected with different notions/problems : McMullen's problem, bounds for $H_{d-1}(n,d)$ and its connection with minimal Radon partitions, tolerance of finite sets and arrangements of hyperplanes.

\subsubsection{McMullen's problem}$H_{0}(n,d)$ is a natural generalization of the following well-known problem of McMullen \cite{LARMAN72}:

\emph{What is the largest integer $\nu(d)$ such that any set of $\nu(d)$ points in general position, $X \subset \R^{d},$ can de mapped by a permissible projective transformation onto the vertices of a convex polytope?}

The best known bounds for McMullen's problem are:
 \begin{equation}\label{eq:boundsmcmullen}
 2d+1 \leq \nu(d) < 2d + \left\lceil \frac{d+1}{2} \right\rceil.
 \end{equation}

The lower bound was given by Larman \cite{LARMAN72} while the upper bound was provided by Ramirez Alfonsin \cite{RAMIREZALFONSIN2001}.
In the same spirit, the following function has also been investigated:

\begin{quote}
$\nu(k,d):=$ the largest integer $m$ such that any set of $n$ points in general position in $\R^d$ can be mapped, by a permissible projective transformation, onto the vertices of a $k$--neighborly polytope.
\end{quote}

Garc\'{i}a-Col\'{i}n \cite{NGC2015} proved that, for  each $3 \leq k \leq \left\lfloor \frac{d}{2}\right\rfloor$

 \begin{equation}\label{eq:boundsmcmullengen}
 d + \left\lceil \frac{d}{k} \right\rceil +1 \leq \nu(k,d) < 2d-k+1,
 \end{equation}

and that
 \begin{equation}\label{eq:boundsmcmullengen1}
 d + \left\lceil \frac{d}{2} \right\rceil +1 \leq \nu(2,d) < 2d+1.
 \end{equation}
These inequalities will be useful later for our proposes.

Let $t\ge 0$ be an integer. Let us define the following function.

\begin{quote}
$n(t,d):=$ the largest integer $n$ such that any set of $n$  points in general position in $\R^d$ can be mapped, by a permissible projective transformation onto the vertices of a convex polytope with at most $t$ points in its interior.
\end{quote}

The function $n(t,d)$ will allow us to study $H_0(n,d)$ in a more general setting, that of oriented matroids.

\begin{remark}\label{relationf_0andn(d,t)}
We have that $n(0,d)=\nu(d)$ and $H_0(n(d,t),d)=n(t,d)-t$.
\end{remark}

Our first main contribution is the following

\begin{theorem}\label{boundsforf_o} Let $d,l\ge 1$ and $n\ge 2$ be integers. Then,
{\scriptsize $$H_0(n,d)\left\{\begin{array}{ll}
=2 & \text{ if } d=1,\ n\ge 2,\\
=5 & \text{ if } d=2, \  n\ge 5, \\
\le 7 & \text{ if } d=3, \ n\ge 7,\\
=n & \text{ if } d\ge 2, \ n\le 2d+1,\\
\le n-1& \text{ if } d\ge 4, \ n\ge 2d+\lceil\frac{d+1}{2}\rceil,\\
\le n-2 & \text{ if } d\ge 4, \ n\ge  2d+\lceil\frac{d+1}{2}\rceil+1,\\
\le n-(l+2) & \text{ if } d\ge 4, \ {2d+3+l(d-2) \leq n < 2d+3+(l+1)(d-2)}.\\
\end{array}\right.$$}
\end{theorem}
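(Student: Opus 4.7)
The plan is to address the eight cases in order of increasing difficulty, systematically working in the Gale transform since vertices of $\mathrm{conv}(X)$ correspond to certain cocircuits in the Gale dual, and permissible projective transformations act on the Gale dual by positive componentwise rescaling. This is the natural language in which both McMullen's problem and its refinements $\nu(d,k)$ and $n(d,t)$ become linear-algebraic, and it is foreshadowed by the paper's notation around Lawrence lifts and reorientations.

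The trivial cases dispose of themselves quickly. For $d=1$, the convex hull of $n\ge 2$ distinct collinear points is a segment, and a permissible projective transformation is a bijection of $X$, so $h_X(1,0)=2$. For $n\le 2d+1$ and $d\ge 2$, Larman's lower bound $\nu(d)\ge 2d+1$ (see \eqref{eq:boundsmcmullen}) gives a permissible transformation placing every point of $X$ in convex position, so $H_0(n,d)=n$. The matching values $\nu(2)=5$ and $\nu(3)=7$ handle the lower-bound side of the $d=2,3$ statements; for the upper bounds I would exhibit, for each $n\ge 5$ (resp.\ $n\ge 7$), explicit configurations whose Gale diagrams are rigid enough that no positive rescaling makes more than five (resp.\ seven) primal points extreme.

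The bound $H_0(n,d)\le n-1$ for $d\ge 4$ and $n\ge 2d+\lceil(d+1)/2\rceil$ is essentially a restatement of Ram\'{i}rez Alfons\'{i}n's inequality $\nu(d)<2d+\lceil(d+1)/2\rceil$. I would start from the bad configuration $X_0$ of cardinality $N:=2d+\lceil(d+1)/2\rceil$ that realises the upper bound on $\nu(d)$, enlarge it by $n-N$ further points in sufficiently generic position, and then use Gale duality to show that no permissible projective transformation of the resulting $n$-point configuration can place every point of $X_0$ on the hull; hence at least one of the $n$ points is always interior. The strengthening $H_0\le n-2$ is obtained by appending one point in a direction complementary to the obstruction circuit of $X_0$, producing two independent obstructions to full extremality.

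The iterative bound $H_0(n,d)\le n-(l+2)$ is the main new content, and would be proved by induction on $l\ge 1$. At each step I append a block of exactly $d-2$ new points to the configuration of the previous level, chosen so that their Gale vectors span $d-2$ further ``bad'' directions independent of those already present. A dimension count then shows that the projective freedom introduced by these $d-2$ points can extremise at most $d-3$ of them, so one additional point is forced into the interior; this is why the intervals in the theorem have width exactly $d-2$. The main obstacle will be executing this inductive step rigorously: one must rule out, \emph{uniformly over all} permissible projective transformations, that the rescaling freedom introduced by the $d-2$ new Gale vectors can simultaneously extremise them all while also liberating one of the previously forced interior points. I expect to handle this through a careful cocircuit/reorientation analysis in the Gale transform, in the spirit of the arguments used to establish \eqref{eq:boundsmcmullen} and \eqref{eq:boundsmcmullengen}.
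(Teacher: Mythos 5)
Your treatment of the easy cases is fine and matches the paper: $d=1$ is trivial, $n\le 2d+1$ follows from Larman's bound, and the $\le n-1$ case follows from the upper bound on $\nu(d)$ by the monotonicity observation that a point interior in $\conv(T(X_0))$ stays interior in $\conv(T(X))$. But everything that constitutes the actual content of the theorem is left as an unexecuted sketch, and in the hardest case the proposed mechanism would not work. First, a framing error: permissible projective transformations do \emph{not} act on the Gale diagram by positive componentwise rescaling --- by the paper's property (c) they correspond to sign vectors $\epsilon\in\{1,-1\}^n$, i.e.\ to \emph{reorientations}; positive rescaling never changes which points are extreme, so under your stated dictionary nothing could ever be gained by a projective transformation. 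Second, for $d=2$ the equality $H_0(n,2)=5$ for all $n\ge 5$ needs two things you do not supply: a lower bound saying that \emph{every} $n$-point planar configuration admits a transformation with at least $5$ hull points (the paper proves this as Proposition \ref{lowerboundvd=2}, an induction on simple pseudoline arrangements showing every arrangement of $\ge 5$ lines has a tope of size $\ge 5$; $\nu(2)=5$ only covers $n=5$), and an upper-bound family of configurations for every $n$, which the paper builds as Lawrence oriented matroids (Theorem \ref{dimension2}). ``Exhibit configurations whose Gale diagrams are rigid enough'' is precisely the problem, not a proof.

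The same objection applies with more force to the $\le n-2$ and $\le n-(l+2)$ cases. Your ``two independent obstructions'' argument does not yield two interior points: two subconfigurations each forcing \emph{some} interior point may force the \emph{same} point, and ruling this out uniformly over all reorientations is the entire difficulty. Your dimension count for the general case is not a valid mechanism: the relevant projective freedom is the choice of the hyperplane at infinity, which is $d$-dimensional independently of $n$, so there is no sense in which ``the projective freedom introduced by $d-2$ new points can extremise at most $d-3$ of them''; moreover extremality of a point is an open (full-dimensional) condition in that parameter space, so no dimension count can force a point to be interior for \emph{every} admissible choice. The paper's actual route (Theorems \ref{tpequeno} and \ref{cotageneral}) is a genuinely combinatorial induction on the rank of explicit Lawrence oriented matroids, encoded by chessboards with sequences $(2,4,2,3,2,3,\ldots)$ and $(2,t+3,2,t+1,\ldots,t+1)$, where one tracks the Top and Bottom Travels through every plain travel (i.e.\ every acyclic reorientation) and counts forced parallel columns; the width $d-2$ of the intervals comes out of the column count $h(r)=(t+1)(d-2)+7$ of these matrices, not from a transversality argument. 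To repair your proposal you would need to replace the dimension count by some such explicit family of configurations together with a case analysis valid for all $2^n$ reorientations.
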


A straightforward consequence of Theorem \ref{boundsforf_o} is the following

\begin{corollary}\label{boundsforf_d-1} Let $d,l\ge 1$ and $n\ge 2$ be integers. Then,
{\scriptsize $$H_{d-1}(n,d)\left\{\begin{array}{ll}
=2 & \text{ if } d=1, \ n\ge 2,\\
=5 & \text{ if } d=2, \ n\ge 5, \\
\le 10 & \text{ if } d=3, \ n\ge 7,\\
\le f_{d-1}(C_d(n)) & \text{ if } d\ge 2, \ n\le 2d+1,\\
\le f_{d-1}(C_d(n-1))& \text{ if } d\ge 4, \ n\ge 2d+\lceil\frac{d+1}{2}\rceil,\\
\le f_{d-1}(C_d(n-2)) & \text{ if } d\ge 4, \ n\ge  2d+\lceil\frac{d+1}{2}\rceil+1,\\
\le f_{d-1}(C_d(n-l-2)) & \text{ if } d\ge 4, \ {2d+3+l(d-2) \leq n < 2d+3+(l+1)(d-2)}\\
\end{array}\right.$$}
where $C_d(n)$ denotes the $d$--dimensional {\em cyclic polytope} with $n$ vertices.

Moreover,  $H_{d-1}(n,d)\ge n(d-1)-(d+1)(d-2)$ when $d\ge 2,  n\le 2d+1$.
\end{corollary}

\subsubsection{Minimal Radon partitions}  Recall that given a set of points in general position $X=\{x_1,\dots,x_n\}\subset\R^d,$ with $n\geq d+2$,  $A, B$ is a {\em Radon partition} of $X$ if $X=A\cup B$, $A\cap B=\emptyset$, and $conv(A) \cap conv(B)\neq \emptyset$.

It happens that $H_{d-1}(n,d)$ is very useful to count minimal Radon partitions. More specifically,
let $X={A}\cup {B}$ be any partition of $X$, we define $r_{X}({A},{B})$ as the number of $(d+2)$--size subsets $S \subset X$ such that $conv( A\cap S) \cap conv({B} \cap S) \neq \emptyset$,
that is, as the number of {\em minimal (size) Radon partitions} induced by ${A}$ and ${B}$.

We define the functions
$$r(X):= \max\limits_{ \{ ({A}, {B}) | A\cup {B} = X \}} r_{X}({A}, {B}) \:\:\:\: \text{  and  }\:\:\:\: r(n,d):=\min_{ X \subset \mathbb{R}^d, |X|=n} r(X).$$

Our second main result establishes the connection with $H_{d-1}(n,d)$.

\begin{theorem}\label{lem:radon}  Let $d,n\ge 1$ be integers. Then, $$r(n,d)=H_{d'-1}(n,d') \text{ where } d'=n-d-2.$$
\end{theorem}

We shall prove this by using the duality between {\em Gale transforms} and projective transformations. Theorem \ref{lem:radon} will play a central role in the study of a problem due to Pach and Szegedy \cite{PACH2003}.

\subsubsection{Pach and Szegedy's question}  In \cite{PACH2003}, Pach and Szegedy investigated the probability that  a triangle induced by 3 randomly  and  independently selected  points in the plane contains the origin in its interior. They remarked  \cite[Last paragraph]{PACH2003}  that in order to generalize their arguments to 3-space the following problem should be solved.

\begin{question}\label{question:ps} Given $n$ points in general position in the plane, coloured red and blue, maximize the number of multicoloured 4-tuples with the property that the convex hull of its red elements and the convex hull of its blue elements have at least one point in common. In particular, show that when the maximum is attained, the number of red and blue elements are roughly the same.
\end{question}

This question may be studied in any dimension. However, if the dimension and the number of points are very similar then optimal partitions can be unbalanced. For example, one may consider $d+2$ points in $\mathbb{R}^d$ with one point contained in the simplex spanned by the remaining $d+1$ points. The optimal partition will have $1$ red point and $d+1$ blue points and becomes arbitrarily unbalanced as $d$ goes to infinity. Nevertheless, it is not clear whether for a large set of points with respect to the dimension it is also possible that very unbalanced partitions optimize the maximum number of induced Radon partitions.


Our third main contribution provides an answer to Question \ref{question:ps} {in the case when $n$ is odd}.

\begin{theorem}\label{thm:unbalanced}  Let $X\subset \mathbb{R}^2$ be a set of points in general position with $|X|=n\geq 9$. Then, $r(n,2)$ is of order $o(n^4)$. Moreover, $X$ always admits a partition $A, B$ with $|A|<\lfloor \frac{n}{2}\rfloor+2$ and $|B|<\lfloor \frac{n}{2}\rfloor+2$ such that $r_X(A,B)=r(n,2)$ is of order $o(n^4)$.

Furthermore, in the case when $|X|$ is odd, if $A, B$ is a partition such $r_X(A,B)=r(n,2)$ then we still have $|A|<\lfloor \frac{n}{2}\rfloor+2$ and $|B|<\lfloor \frac{n}{2}\rfloor+2$.
\end{theorem}


\subsubsection{Tolerance}
Let us define the following function

\begin{quote}
$\lambda(t,d)$:= the   smallest number $\lambda$ such that for any set $X$ of  $\lambda$ points in
  $\mathbb{R}^d$ there exists a partition of $X$ into two sets $A$,$B$ and a subset $P\subseteq X$ of cardinality  $\mu-i$, for some $0\le i\le t$, such that $conv(A\setminus y)\cap conv(B\setminus y)\neq\emptyset$ for every  $y\in P$ and $conv(A\setminus y)\cap conv(B\setminus y)=\emptyset$ for every  $y\in X\setminus P$.
\end{quote}

There is an atractive relationship between $n(t,d)$ and $\lambda(t,d)$. The latter is shown in Subsection \ref{sec:tolerence} by using Gale's diagrams (Proposition \ref{prop:nlambda}).

The parameter $\lambda(t,d)$ can be considered as a generalization of the {\em tolerant} Radon theorem stating that there is a minimal positive integer $N=N(t,d)$ so that any set $X\subset\mathbb{R}^d$ with $|X|=N$ allows a partition into two pairwise disjoint subsets $X=A\cup B$ such that after deleting any $t$ points from $X$ the convex hulls of remaining parts intersect, i.e.,

$$conv(A\setminus Y)\cap conv(B\setminus Y)\neq\emptyset \text{ for any } Y\subset X, \ |Y|=t.$$

The information on $\lambda(d,t)$ sheds light on the understanding of the tolerant Radon theorem and a more general version known as the {\em tolerant Tverberg theorem}, see \cite{GarciaRaggiPensado,SoberonStrauz}.

\subsubsection{Arrangements of (pseudo)hyperplanes} A {\em projective} $d$-arrangement of $n$ pseudo-hyperplanes $\mathcal{H}(d,n)$ is a finite collection of pseudo-hyperplanes in the projective space $\mathbb{P}^d$ such that no point belongs to every hyperplane of $\mathcal{H}(d,n)$. Any such arrangement, $\mathcal{H}$ decomposes $\mathbb{P}^d$  into a $d$--dimensional cell complex. A cell of dimension $d$ is usually called a \emph{tope} of the arrangement $\mathcal{H}$. The \emph{size} of a tope is the number of pseudo-hyperplanes bordering it.


A classic research topic is to study the combinatorics of the topes in arrangements of hyperplanes. For instance, it is known \cite{Roudneff1988, Shannon1979} that arrangements of $n$ hyperplanes (that is, realizable oriented matroids) always admit $n$ topes of size $d+1$ (a simplex). In \cite{Richter}, Richter proved that the number of topes simplices in an arrangement of $4k$ pseudo hyperplanes  in $\mathbb{P}^3$ is at most $3k+1$ for $k\ge 2$. Finding a sharp lower bound for the number of simplices in the non-realizable case is an open problem for $d\ge 3$. Las Vergnas conjectured that in fact every arrangement of (pseudo) hyperplanes in $\mathbb{P}^d$ admits at least one simplex. In \cite{Roudneff1991}, Roudneff proved that the number of {\em complete topes} (a tope touching all the hyperplanes) of the {\em cyclic arrangement} on dimension $d$ with $n$ hyperplanes, is at least
$\sum\limits_{i=0}^{d-2} {{n-1}\choose{i}}$ and conjectured \cite[Conjecture 2.2]{Roudneff1991} that for every $d$-arrangement  of $n>2d+ 1>5$  (pseudo)hyperplanes has at most this number of complete topes; see \cite{MonteRam} for the proof of this conjecture for an infinite family of arrangements.
\smallskip

It happens that the function $H_0(n,d)$ is very helpful to investigate the size's behavior of topes in arrangements of (pseudo)hyperplanes. Here, we may consider the following questions :
\begin{quote} Are there simple arrangements of $n$ (pseudo)hyperplanes in $\mathbb{P}^d$ in which every tope is of at most {\em certain size} ?
\end{quote}
\begin{quote} Which arrangements of $n$ (pseudo)hyperplanes in $\mathbb{P}^d$ contain a tope of at least {\em certain size} ?
\end{quote}

We partially answer these questions for small values of $d$.

\subsection{Paper's organization}
The structure of the paper is as follows: in next section we give some easy values and bounds for both $H_{0}(n,d)$ and $H_{d-1}(n,d)$ (Propositions \ref{easybounds} and \ref{easybounds}).

In Section \ref{sec:lawrence}, we discuss the treatment of the function $n(d,t)$ in the oriented matroid setting. We also recall several notions and results on oriented matroids and, specifically, on the special class of {\em Lawrence oriented matroids} (LOM) that are needed for the rest of the paper.

In Section \ref{cotassupmatroides}, we present several upper bounds based on specific constructions of LOM (Theorems \ref{dimension2}, \ref{dimension3}, \ref{cotageneral}, \ref{tpequeno}). The latter yield to the proofs of Theorem \ref{boundsforf_o} and Corollary \ref{boundsforf_d-1} also presented in this section.

After recalling the relationship between Gale transforms and projective transformations, we prove Theorem \ref{lem:radon} in Section \ref{sec:RadonPart}. We also present values and bounds for $r(n,2)$ (Theorem \ref{equalityd=2}) that we use to prove Theorem \ref{thm:unbalanced} at the end of this section.

In Section \ref{sec:concluding-rem}, we present some results concerning the size of {\em topes} in arrangements of (pseudo)hyperplanes. Finally, in an Annex, we present the proof of a result (Theorem \ref{dpar}) improving the upper bound (given in Theorem \ref{cotageneral}) when $d$ is even.

\section{Some basic results}\label{sec:basic}

The well-known Upper Bound Theorem  (UBT) \cite{MCMULLEN1971187} states that for all $1\leq k \leq d$,
$$f_{k-1}(P) \leq f_{k-1}(C_d(n))$$
among all simplicial (convex) polytopes with $n$ vertices $P \subset \R^d$  where $C_d(n)$ is the $d$--dimensional {\em cyclic polytope} with $n$ vertices, usually defined as the convex hull of $n$ distinct points in the moment curve $x(t):=(t,t^2,\dots ,t^d)$.

For $d\ge 2$ and $0\le k\le d-1$, the number of $k$-faces of $C_d(n)$ with $n$ vertices is given by

\begin{eqnarray}\label{form-cyclic}
f_{k}(C_{d}(n))= \displaystyle \frac{n-\delta(n-k-2)}{n-k-1}\sum\limits_{j=0}^{\lfloor\frac{d}{2}\rfloor}
\binom{n-1-j}{k+1-j} + \displaystyle\binom{n-k-1}{2j-k-1+\delta}
\end{eqnarray}

where $\delta=d-2\lfloor\frac{d}{2}\rfloor$.

Since $H_0(n,d)$ is the maximal number of projective vertices obtained from any set of $n$ points in $\mathbb{R}^d$ then, by the UBT, the number of $k$-faces of a projective polytope on $H_0(n,d)$ vertices is bounded by the number of $k$-faces of $C_d(H_0(n,d))$. We thus have  have

\begin{equation}\label{eq:upperbound}
H_{d-1}(n,d) \leq f_{d-1}(C_{d}(H_0(n,d))) \text{ for all } n\ge 1.
\end{equation}

Analogously,  the Lower Bound Theorem \cite{BARNETTE1973, BILLERA1981} states that for all $1\leq k \le d-1$, $$ f_k(P_d(n))\le f_k(P)$$  among all simplicial (convex) polytopes $P \subset \R^d$ with $n$ vertices,  where $P_d(n)$ is a $d$-dimensional {\em stacked polytope} with $n$ vertices, defined as a polytope formed from a simplex by repeatedly gluing another simplex onto one of its facets.

For $d\ge 2$ and $0\le k\le d-1$, the number of $k$-faces of $P_d(n)$ with $n$ vertices is
\begin{eqnarray}\label{form-satcked}
f_{k}(P_{d}(n))=\left\{\begin{array}{ll}
 \binom{d}{k}n - \binom{d+1}{k+1}k &  \text{ if }  0\leq k \leq d-2,\\
 (d-1)n - (d+1)(d-2) &   \text{ if }  k=d-1.\\
 \end{array}\right.
\end{eqnarray}

As above, we may deduce that
\begin{equation}\label{upperboundcyclicp}
f_{d-1}(P_d(H_0(n,d)))\leq H_{d-1}(n,d).
\end{equation}

\begin{proposition}\label{easybounds} Let $d\ge 2, n\ge 1$ be integers. Then,
$$H_0(n,d)\left\{\begin{array}{ll}
=n & \text{ if } n\le 2d+1,\\
<n & \text{ if } n \ge 2d+\lceil\frac{d+1}{2}\rceil.\\
\end{array}\right.$$
\end{proposition}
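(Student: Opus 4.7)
The plan is to deduce Proposition \ref{easybounds} directly from the two bounds on McMullen's number $\nu(d)$ recorded in (\ref{eq:boundsmcmullen}). Both the first and second cases are essentially translations of the definition of $\nu(d)$ into statements about $H_0$.

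For the first case I would argue as follows. By the definition of $\nu(d)$, every set $X \subset \R^d$ of at most $\nu(d)$ points in general position admits a permissible projective transformation $T$ such that every point of $T(X)$ is a vertex of $conv(T(X))$: one can first enlarge $X$ to a set of size $\nu(d)$ by adding points in general position, apply the defining transformation, and then restrict, using the elementary fact that any subset of the vertex set of a convex polytope is the vertex set of its own convex hull. Hence $h_X(d,0)=|X|$ for every such $X$, and taking the minimum over $X$ with $|X|=n$ gives $H_0(n,d)=n$. Larman's lower bound $\nu(d)\ge 2d+1$ then yields $H_0(n,d)=n$ for all $n\le 2d+1$.

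For the second case I would invoke Ram\'{\i}rez Alfons\'{\i}n's upper bound $\nu(d) < 2d+\lceil(d+1)/2\rceil$, which furnishes a set $X_0$ of size $n_0:=2d+\lceil(d+1)/2\rceil$ in general position for which, no matter the permissible projective transformation $T$, at least one point of $T(X_0)$ lies in the interior of $conv(T(X_0))$. A short monotonicity step extends this to every $n\ge n_0$: enlarge $X_0$ by $n-n_0$ further points in general position to form $X$; any permissible $T$ for $X$ is also permissible for $X_0$ (the denominator $\langle c,x\rangle+\delta$ is nonzero on $X\supset X_0$), and a non-extreme point of $T(X_0)$ remains non-extreme in $conv(T(X))$ since $conv(T(X_0)) \subset conv(T(X))$. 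Hence $h_X(d,0)\le n-1$, and minimising over configurations gives $H_0(n,d) \le n-1 < n$.

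The third and fourth cases are subsumed by the first whenever $d\ge 2$, since $d+3 \le 2d+1$ holds for $d\ge 2$ and $d+4 \le 2d+1$ holds for $d\ge 3$; the remaining very small cases are routine to verify directly (for instance, via Gale duality, where the Gale transform lives in low dimension and any isolated sign can be flipped to force every original point to be a vertex). There is therefore no serious obstacle: the content is bookkeeping against the recorded bounds on $\nu(d)$ together with the elementary observations that permissibility is inherited by subconfigurations and that non-vertex status is preserved when the configuration is enlarged.
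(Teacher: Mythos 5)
Your argument is correct and, for the first two cases, is essentially the paper's own proof with the implicit monotonicity steps made explicit (the paper simply cites the two bounds in \eqref{eq:boundsmcmullen} and asserts the conclusions; your enlarge-then-restrict argument for $n<\nu(d)$ and your extend-a-bad-configuration argument for $n>2d+\lceil\frac{d+1}{2}\rceil$ are exactly the details being suppressed). Where you genuinely diverge is in the third and fourth cases: the paper proves them by invoking the lower bound $\nu(d,k)\ge d+\lceil d/k\rceil+1$ of \eqref{eq:boundsmcmullengen} with $k=\lfloor d/2\rfloor$, i.e.\ by showing the configuration is projectively equivalent to a \emph{neighbourly} polytope, whereas you simply observe that $d+3\le 2d+1$ for $d\ge 2$ and $d+4\le 2d+1$ for $d\ge 3$, so these cases are already subsumed by the first. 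Your observation is valid and more elementary for the statement as written, but the paper's heavier route is not wasted: the neighbourliness is precisely what is needed later, in Proposition \ref{easybounds1}, to upgrade ``all points are vertices'' to the equality $H_{d-1}(n,d)=f_{d-1}(C_d(n))$, which does not follow from convex position alone. One caveat: the leftover case $d=1$ is not ``routine to verify'' but actually false (three collinear points can never all be vertices of their hull, and indeed $H_0(n,1)=2$ by Theorem \ref{boundsforf_o}); the proposition implicitly assumes $d\ge 2$ in its last three lines, as does the paper's proof, so you should state that restriction rather than promise a verification.
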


\begin{proof} Let $n \leq 2d+1$. By the lower bound of $\nu(d)$ given in  \eqref{eq:boundsmcmullen}, it follows that any set of points of cardinality $n$ can be mapped to the vertices of a convex polytope by a permissible projective transformations, and thus $H_{0}(n,d)=n$. If $n \ge 2d + \lceil \frac{d+1}{2} \rceil$ then by the upper bound of $\nu(d)$ given in Equation \eqref{eq:boundsmcmullen},  there   {exists a set of $n$ points that} cannot be mapped to the vertices of a convex polytope by any permissible projective transformation, and thus $H_0(n,d)\le n-1$.

\end{proof}

We have the following easy consequence of Proposition \ref{easybounds} and Inequality \eqref{eq:upperbound}.

\begin{proposition}\label{easybounds1} Let $d\ge 2 ,n\ge 1$ be integers. Then,
$$H_{d-1}(n,d)\left\{\begin{array}{ll}
\le f_{d-1}(C_{d}(n)) & \text{ if } n\le 2d+1,\\
\le f_{d-1}(C_{d}(n-1)) & \text{ if } n \ge 2d+\lceil\frac{d+1}{2}\rceil.\\
\end{array}\right.$$
\end{proposition}

Moreover, if  $n\le 2d+1, d\ge 2$ then, by Propostion \ref{easybounds}, $H_0(n,d)=n$  and so $f_{d-1}(P_d(H_0(n,d)))=f_{d-1}(P_d(n))$ obtaining, by Equation \eqref{upperboundcyclicp}, that
\begin{equation}\label{lowerboundHd-1}
f_{d-1}(P_d(n))\le H_{d-1}(n,d).
\end{equation}

\section{Oriented matroid setting}\label{sec:lawrence}

Let us briefly give some basic notions and definitions on oriented matroid theory needed for the rest of the paper. We refer the reader to \cite{OM1999} for background on oriented matroid theory.

\subsection{Oriented matroid preliminaries}
Let $M$ be an oriented matroid on a finite set $E$. The matroid $M$ is \emph{acyclic} if it does not contain positive circuits (otherwise, $M$ is called \emph{cyclic}).  A {\em reorientation} of $M$ on $A\subseteq E$ is performed by changing the signs of the elements in $A$ in all the circuits of $M$. It is easy to check that the new set of signed circuits is also the set of circuits of an oriented matroid, usually denoted by $_{-A}M$. A reorientation is {\em acyclic} if $_{-A}M$ is acyclic. An element $e \in E$ of an acyclic oriented matroid is \emph{interior} if there exists a signed circuit $C=(C^+,C^-)$ with $C^-=\{e\}$.

Cordovil and Da Silva \cite{CORDOVIL1985157} proved that a permissible projective transformation on a set $n$ points in $\R^d$ corresponds to an acyclic reorientation of its oriented matroid of affine dependencies $M$ of rank $r=d+1$ and that the converse also holds.

As a consequence of Cordovil and Da Silva's result it is evident that the natural generalization of $n(t,d)$ in terms of oriented matroids is given by the following function:

\begin{quote}
$\bar n(t,d):=$ the largest integer $m$ such that for any uniform {oriented} matroid $M$ of rank $d+1$ with $m$ elements there is an acyclic reorientation of $M$ with at most $t$ interior elements.
\end{quote}

We notice that $n(t,d)=\bar n(t,d)$ in the case when $M$ is {\em realizable}.

In this section we shall provide examples of uniform oriented matroids with the property that in any of their acyclic reorientations there are at least $t+1$ interior elements. These examples provide upper bounds on $\bar{n}(t,d)$. With this aim, we will briefly outline some facts about {\em Lawrence oriented matroids}. For further details and proofs on this special class of matroids see  \cite{OM1999,RAMIREZALFONSIN2001}.

\subsection{Lawrence oriented matroid}
A {\em Lawrence oriented matroid} (LOM) ${M}$ of rank $r$ on the totally ordered set $E=\{1,\ldots,n\}$, $r\le n$, is a uniform oriented matroid obtained as the union of $r$ uniform oriented matroids ${M}_1,\ldots,{M}_r$ of rank $1$ on $(E,<)$. LOMs can also be defined via the signature of their bases, that is, via their chirotope $\chi$. Indeed, the chirotope $\chi$ corresponds to some LOM, ${M}_A,$ if and only if there exists a matrix $A=(a_{i,j})$, $1\le i\le r$, $1\le j\le n$ with entries from $\{+1,-1\}$ (where
the $i$-th row corresponds to the chirotope of the oriented matroid $\mathcal{M}_i$) such that
\begin{equation*}
\chi(B)=\prod_{i=1}^{r} a_{i,j_i}
\end{equation*}
where $B$ is an ordered $r$-tuple, $j_1\le\cdots\le j_r,$ of elements of $E$.

\begin{remark} The following statements about LOMs hold:
\begin{enumerate}[(a)]
\item Acyclic   {LOMs} are realizable as configurations of points (since they are unions of realizable oriented matroids).
\item   {LOMs} are closed under minors and duality.
\item The LOM corresponding to the reorientation of an element $c \in E,$  $_{\bar c}M_A$ is obtained by reversing the sign of all the coefficients of a column $c$ in $A$.
\end{enumerate}
For a proof of this remark see \cite{OM1999}.
\end{remark}

From now on, we will denote $A=A_{r,n}$ as a matrix with entries $a_{i,j}\in \{+1,-1\}$, $1\le i\le r$, $1\le j\le n$.
Some of the following definitions and lemmas, which highlight the properties of $A$ and facilitate the study of  this type of matroid, were introduced and proved in \cite{RAMIREZALFONSIN2001}.

A \emph{Top Travel}, denoted as $TT,$ in $A$ is a subset of the entries of $A$,
$$\{ [a_{1, 1}, a_{1, 2},\dots ,a_{1, j_{1}}] , [a_{2, j_{1}}, a_{2, j_{1}+1},\dots , a_{2, j_{2}}] , \dots , [a_{s, j_{s-1}}, a_{s, j_{s-1}+1},\dots , a_{s, j_{s}}] \},$$
where $[a_{l, j_{l-1}},\dots ,a_{l, j_{l}}]$ are the entries in line $l$,
with the following characteristics:
\begin{enumerate}
\item$ a_{i, j_{i-1}} \times a_{i, j}= 1, \quad \forall \quad\  j_{i-1} \leq j < j_{i};$
\item$ a_{i, j_{i-1}} \times a_{i, j_{i}}= -1;$  and
\item either
\begin{enumerate}
\item $1\leq s < r$;   then  $j_{s} = n$ or
\item $s=r$ and $ j_{s} \leq n.$
\end{enumerate}
\end{enumerate}

A \emph{Bottom Travel}, denoted as $BT$, in $A$ is a subset of the entries of $A$,
$$\{ [a_{r, n}, a_{r, n-1},\dots ,a_{r, j_{r}}] , [a_{r-1, j_{r}}, a_{r-1, j_{r}-1},\dots , a_{r-1, j_{r-1}}] , \dots , [a_{s, j_{s-1}}, a_{s, j_{s-1}+1},\dots , a_{s, j_{s}}] \},$$ with the following characteristics:
\begin{enumerate}
\item $ a_{i, j_{i+1}} \times a_{i, j}= 1, \quad \forall \quad\  j_{i} < j \leq j_{i+1};$
\item $ a_{i, j_{i+1}} \times a_{i, j_{i}}= -1;$  and
\item either
\begin{enumerate}
\item $1 < s \leq r$;   then  $j_{s} = 1$ or
\item $s=1$ and $ 1 \leq j_{s}.$
\end{enumerate}
\end{enumerate}

Every matrix $A$ has exclusively one $TT$ and one $BT$ and they carry surprising information about $M_{A}$.

\begin{remark} Let $A$ be a $r \times n$-matrix, then the following statements are equivalent:
\begin{enumerate}[(a)]
\item$M_{A}$ is \emph{cyclic};
\item $TT$ ends at $ a_{r,s}$ for some $ 1 \leq s < n$; and
\item $BT$ ends at $ a_{1, s'}$ for some $ 1< s' \leq n$.
\end{enumerate}
For a proof of this remark see \cite{RAMIREZALFONSIN2001}.
\end{remark}

Let $a_{i,k-1},a_{i,k},a_{i,k+1}\in TT$ we say that $TT$ and $BT$ are \emph{parallel} at column $k$ if either $a_{i,k-1},a_{i,k},a_{i,k+1}\in BT$ or $a_{i+1,k-1},a_{i+1,k},a_{i+1,k+1}\in BT$, with $2\le k\le n-1$, $1\le i\le r$.

\begin{remark} \label{rem:interior}  \cite{RAMIREZALFONSIN2001}
Let $A$ be a $r \times n$-matrix then $k$ is an interior element of $M_A$ if and only if
\begin{enumerate}[(a)]
\item$BT=(a_{r,n},\ldots,a_{1,2},a_{1,1})$ for $k=1$,
\item $TT=(a_{1,1},\ldots,a_{r,n-1},a_{r,n})$ for $k=n$,
\item $TT$ and $BT$ are parallel at $k$ for $2\le k\le n-1$.
\end{enumerate}
\end{remark}

Remark \ref{rem:interior} implies that we can identify acyclic reorientations and interior elements of an oriented matroid $M_{A}$ by studying the behaviour of the $TT$ and $BT$ in the re-orientations of $A$.

\begin{example} Let $M_A$ be the LOM associated to the matrix $A$ described in Figure \ref{ejemplo}. $M_A$ is acyclic, and $4$, $5$ and $6$ are interior elements.
\begin{figure}[htb]
\begin{center}
 \includegraphics[width=.4\textwidth]{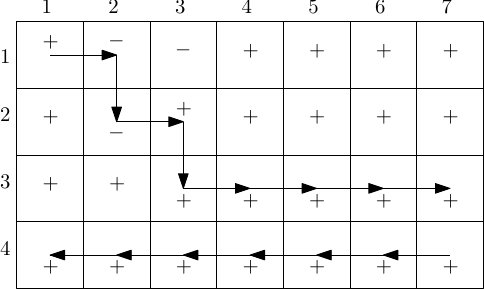}
\caption{Top and Bottom travels in matrix $A$.} \label{ejemplo}
\end{center}
\end{figure}
\end{example}

Furthermore, all possible re-orientations of the matroid can be identified with yet another simple object;

A \emph{Plain Travel}  in $A$, denoted as $PT$, is a subset of the entries of $A$ which satisfies:
$$PT=\{ [a_{1, 1}, a_{1, 2},\dots ,a_{1, j_{1}}] , [a_{2, j_{1}}, a_{2, j_{1}+1},\dots , a_{2, j_{2}}] , \dots , [a_{s, j_{s-1}}, a_{s, j_{s-1}+1},\dots , a_{s, j_{s}}] \} $$  with \( 2\leq j_{i-1}  {<} j_{i}  \leq n$ for all $1 \leq i \leq r ,\;\; 1< s \leq r$   and  $j_{s} = n$.

 \begin{remark} There is a bijection between the set of all plain travels of $A$ and the set of all acyclic reorientations of $M_{A}$, it is defined by associating to each $PT$ the set of column indices of $A$ that should be reoriented in order to transform $A$ into a new matrix $\mathcal A$ whose $TT$ is identical to $PT$. For a proof of this remark see \cite{RAMIREZALFONSIN2001}.
\end{remark}

The \emph{chessboard} $B[A]$ of $A$ is another useful object that can be constructed from its entries.  It is defined as a black and white board of size $(r-1) \times (n-1)$, such that the square $s(i,j)$ has its upper left hand corner at the intersection of row $i$ and column $j$; a square $s(i,j)$, with $1 \leq i \leq r-1$ and $ 1 \leq j \leq n-1$ will be said to be {\em black} if the product of the entries $a_{i,j}, a_{i,j+1}, a_{i+1,j}, a_{i+1,j+1}$ is $-1$, and {\em white} otherwise. See Figure \ref{ejemplo2} for an example.

\begin{figure}[htb]
\begin{center}
 \includegraphics[width=.4\textwidth]{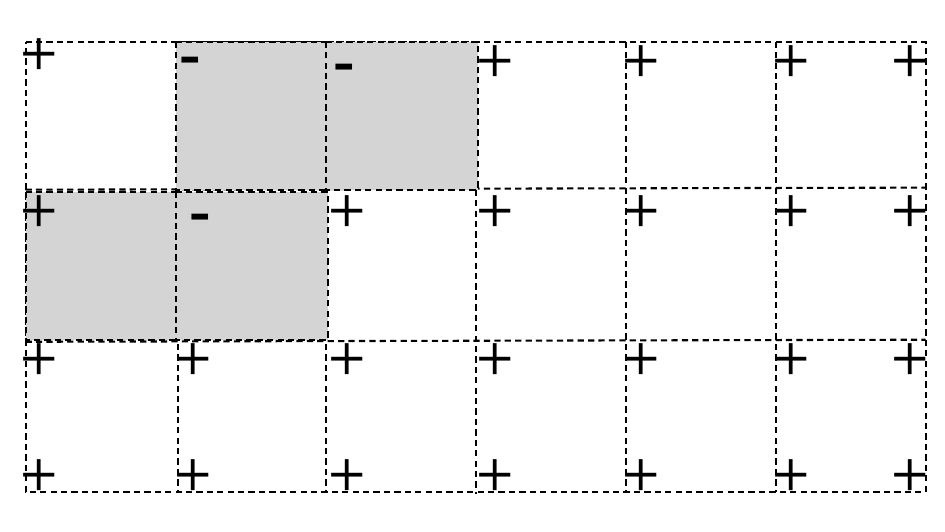}
\caption{The chessboard $B[A]$ of the matrix $A$ described in Figure \ref{ejemplo}.} \label{ejemplo2}
\end{center}
\end{figure}

\begin{remark} \label{prop:chessboard}  \cite{RAMIREZALFONSIN2001} The following statements establish a link between chessboards, reorientations of a LOM, and $BT$ and $TT$:
\begin{enumerate}[(a)]
\item $B[A]$ is invariant under reorientations of $M_A$.
\item If in a pair of consecutive columns there is one black square between the top travel $TT$ and the bottom travel $BT$, they follow symmetrically opposite paths through the entries of the matrix; in other words, if $TT$ makes a single horizontal movement from $a_{i,j}$ to $a_{i,j+1}$ and continues its movement forward in the same row (i.e $a_{i,j} = a_{i,j+1}$) , then $BT$ goes from  $a_{i+h,j+1}$ to $a_{i+h,j}$  and moves vertically to  $a_{i+h-1, j}$ (i.e. $a_{i+h,j+1}\neq a_{i+h,j}$), with $h\geq{1}$, and vice versa.
\end{enumerate}
\end{remark}

\section{Upper bounds for $\bar{n}(d,t)$}\label{cotassupmatroides}

\subsection{Small dimension $d$} We first show that $\bar{n}(t,d)\le 2d+1+t$ for $d=2,3$ and every $t\ge0$. The following remark will be very useful throughout this section.

 \begin{remark} \label{dim1}
Any acyclic reorientation of a rank $2$ oriented matroid on $n$ elements  has 
 $n-2$ interior elements.
\end{remark}

Given a matrix $A=A_{n,r}$, let $A^+_{i,j}$ be  the sub-matrix of $A$  that results after removing rows $i+1,\ldots, r$ and columns $j+1,\ldots, n$. Similarly, let $A^-_{i,j}$ be the sub-matrix of $A$ resulting after the removal of rows $1,\ldots, i-1$ and columns $1,\ldots,j-1$.

 \begin{theorem}\label{dimension2}
$\bar{n}(t,2)<t+6$ for every integer $t\ge 0$.
\end{theorem}\begin{proof}
Let $A=A_{3,t+6}$  be such that the corresponding chessboard $B[A]$ has exactly one black square for each column and   let  $PT$ be any plain travel in $A$. We shall prove that the corresponding $\mathcal{A}$ in which $PT$ is the Top Travel has at least $t+1$ interior elements.
 Let $j$ be the smallest number such that column $j$ is not an interior element in $\mathcal{A}$ and there are not vertical movements in column $j$ of $PT$ neither of $BT$. 
  If $j$ does not exists, as $PT$ and $BT$ can make at most $2$ vertical movements each, then $\mathcal{A}$ would have at least $t+2$ interior elements. Hence, we may suppose that $j$ exists.  By the rules of  {Proposition} \ref{prop:chessboard}, $PT$ and $BT$  make a vertical movement in column $j+1$ and $j-1$, respectively.
   Notice by the definition of $j$ that $PT$ and $BT$ arrives in column $j$ at row $1$ and $3$, respectively, otherwise $j$ would be an interior element. Then,  each interior element of $A^+_{2,j-1}$  and $A^-_{2,t+6-j}$ is an interior element of $\mathcal{A}$. Therefore,   $A^+_{2,j-1}$  has  $j-3$ interior elements and  $A^-_{2,t+6-j}$ has $t+6-j-2$ interior elements  by  Remark \ref{dim1}, concluding the proof.
 \end{proof}

Given a matrix $A=A_{r,n}$, we say that a chess board $B[A]$ has the \emph{sequence} $(x_1,x_2,\ldots,x_{r-1})$ if the square $s(i,j)$ is black if and only if $\sum\limits_{k=0}^{i-1}x_{k}+1\le j\le \sum\limits_{k=0}^{i}x_{k}$ with $1 \leq i \leq r-1$ and $1 \leq j \leq n-1$, where we define $x_0=0$.

\begin{example} Figure \ref{figurehypothesis} illustrates a chessboard with sequence $(2,3,2,3)$.
\end{example}

 \begin{theorem}\label{dimension3}
$\bar{n}(t,3)<t+8$ for every integer $t\ge 0$.
\end{theorem}\begin{proof}
 Let $A=A_{4,t+8}$  be such that the corresponding chessboard $B[A]$ has a sequence $(2,t+3,2)$ and  let  $PT$ be any plane travel in $A$. We prove that the corresponding $\mathcal{A}$ in which $PT$ is the Top Travel has at least $t+1$ interior elements. Let $j$ be the smallest number such that column $j$ is not an interior element in $\mathcal{A}$ and there are not vertical movements in column $j$ of $PT$ neither of $BT$.
  If $j$ does not  {exist}, as $PT$ and $BT$ can make at most $3$ vertical movements each, then $\mathcal{A}$ would have at least $t+2$ interior elements. Hence, we may suppose that $j$ exists.  By the rules of  {Proposition} \ref{prop:chessboard}, $PT$ and $BT$  make a vertical movement in column $j+1$ and $j-1$, respectively.

By the definition of  $j$, $TT$ arrives in column $j$ at row $1$ or $BT$ arrives in column $j$ at row $4$, otherwise $j$ would be an interior element. Suppose without loss of generality that $TT$ arrives in column $j$ at row $1$. Then, $BT$ arrives in column $j$ at row $2$ or $3$ (see Figure \ref{dim3}).
   \begin{figure}[htb]
\begin{center}
 \includegraphics[width=.2\textwidth]{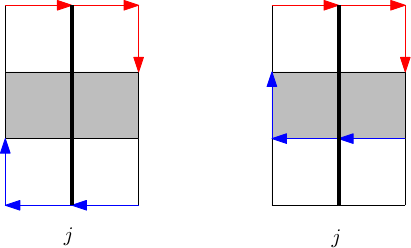}
\caption{$BT$ in blue and $TT$ in red.}
 \label{dim3}\end{center}
\end{figure}
Notice that each interior element of $A^+_{3,j-1}$   and $A^-_{2,t+8-j}$ is an interior element of $\mathcal{A}$. If $BT$ arrives in column $j$ at row $3$, by the rules of  {Proposition} \ref{prop:chessboard}, $A^+_{3,j-1}$  has  $j-4$ interior elements and $A^-_{2,t+8-j}$ has $t+5-j$ interior elements, concluding the proof in this case.  If $BT$ arrives in column $j$ at row $2$, by the rules of  {Proposition} \ref{prop:chessboard} and by Remark \ref{dim1}, one can check that $A^+_{3,j-1}$  has  $j-3$ interior elements and $A^-_{2,t+8-j}$ has at least $t+4-j$ interior elements, concluding the proof.
 \end{proof}

\subsection{High dimension $d$}
In what follows, we will consider different matrices $A=A_{r,h(r)}$ where $h(r)$ is a strictly increasing function. If $B[A]$ has the sequence $(x_1,x_2,\ldots,x_{r-1})$, we will consider functions, $h(r),$ where $h(1)=1$, $\sum\limits_{k=1}^{m-1}x_{k}+1\le h(m)\le \sum\limits_{k=1}^{m}x_{k}+1$ if  $2\le m\le r-1$ and $\sum\limits_{k=1}^{r-1}x_{k}+1\le h(r)$.

For every $1\le m\le r-1$, we will say that the element $a_{m,h(m)}$  is  the \emph{$m$--th corner} of $A$.

\begin{example} Figure \ref{figurehypothesis} illustrates a chessboard with $h(r)=2(r-1)+\lceil\frac{r}{2}\rceil$.
\end{example}

The following lemmas will be helpful ingredients for our proposes.

 \begin{lemma}\label{lemmageneral1}
 Let $A=A_{r,h(r)}$ be a matrix with $r\ge 3$ such that $B[A]$ has the sequence $(x_1,x_2,\ldots,x_{r-1})$, with $x_i\ge1$, $1\le  i\le r-1$. Suppose that $TT$ always passes strictly above all corners after  the $1$--st corner, then

\begin{itemize}
 \item [(i)] $a_{i,h(2)}\in BT$ for some $i\le \max\{2,2r-h(r-1)+h(2)-3\}$,
\item [(ii)] $a_{i,h(2)}\in BT$ for some $i\le \max\{2,2r-h(r)+h(2)-1\}$ if $TT$ and $BT$ do not share steps from columns $h(2)$ to $h(r)$.
 \end{itemize}
 \end{lemma}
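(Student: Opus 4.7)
My plan is to track the bottom travel $BT$ as it moves leftward from its start at $a_{r,h(r)}$ and to bound how far up it must be by the time it reaches column $h(2)$. The essential tool will be Proposition~\ref{prop:chessboard}(b), which, in every pair of consecutive columns containing exactly one black square between the two travels, forces $TT$ and $BT$ to make mirror-image movements (a horizontal step of one against a vertical step of the other).

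First I would translate the hypothesis. The condition that $TT$ passes strictly above every corner after the first means that at each column $h(m)$ with $m\ge 2$ the top travel is at some row of index at most $m-1$; in particular $TT$ is in row $1$ at column $h(2)$, in row $\le r-2$ at column $h(r-1)$, and in row $\le r-1$ at column $h(r)=n$. Therefore $TT$ performs at least $r-3$ downward moves while traversing $[h(2),h(r-1)]$ and at least $r-2$ downward moves in $[h(2),h(r)]$.

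Next I would apply the chessboard coupling along the relevant range. Because the chessboard has the staircase sequence $(x_1,\ldots,x_{r-1})$, each pair of consecutive columns contains at most two black squares, and the single-black-square pairs---those where the travels are coupled by Proposition~\ref{prop:chessboard}(b)---are precisely the column pairs straddling a chessboard row boundary. Each vertical drop of $TT$ in such a pair forces $BT$ to move horizontally there, while each horizontal step of $TT$ forces $BT$ to move upward. A counting of the available column pairs against the forced drops of $TT$ then yields: for part (i), over the range $[h(2),h(r-1)]$ of length $h(r-1)-h(2)$, $BT$ climbs at least $r-\max\{2,\,2r-h(r-1)+h(2)-3\}$ rows by column $h(2)$; for part (ii), over the range $[h(2),h(r)]$ and using the additional no-shared-steps hypothesis, $BT$ climbs at least $r-\max\{2,\,2r-h(r)+h(2)-1\}$ rows, sharper by the two units that the stronger hypothesis saves. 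The desired bounds on the row $i$ with $a_{i,h(2)}\in BT$ follow immediately by subtracting from the starting row $r$.

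The principal obstacle I anticipate is the bookkeeping in the coupling step. Proposition~\ref{prop:chessboard}(b) only governs column pairs containing exactly one black square between the travels; pairs with zero or two black squares, and pairs where $TT$ and $BT$ momentarily share steps, must be handled by direct inspection of the sign patterns and of the corner positions. In part (ii) the no-shared-steps hypothesis rules out some of these cases cleanly and yields the tighter constant, whereas part (i) must use the corner-avoidance assumption to absorb the residual slack into the $-3$ term.
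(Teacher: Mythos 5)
There is a genuine gap. The coupling you invoke, Proposition~\ref{prop:chessboard}(b), only transfers information between the two travels when the black square of a column pair lies \emph{between} them. With the sequence $(x_1,\dots,x_{r-1})$ every pair of consecutive columns up to $\sum x_i$ contains exactly one black square (not only the pairs ``straddling a row boundary''), so the real dichotomy is not which pairs are black but whether $BT$ stays strictly \emph{below} the corner diagonal. If at some column $h(m)$, $m\ge 3$, the bottom travel reaches or rises above the $m$-th corner, then to the left of that column the black squares are no longer between $TT$ and $BT$, the coupling gives nothing further, and $BT$ can drift to column $h(2)$ at row $m>2$; ``direct inspection of sign patterns'' does not exclude this. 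The paper's proof is an induction on $r$ precisely to handle this situation: it first disposes of the cases where $BT$ meets a corner or some $a_{i,h(m)}$ with $i\le m$ (by restricting to the submatrix $A^+_{m,h(m)}$, where the restricted travels are again the top and bottom travels, and invoking the induction hypothesis), and only in the residual case --- $BT$ strictly below every corner, which also guarantees the travels share no steps on $[h(2),h(r-1)]$ --- does it run the count you describe. Your purely direct argument omits this case analysis, and without it the count does not establish the bound.

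A secondary problem is the direction of your estimates on $TT$. You assert that $TT$ makes \emph{at least} $r-3$ (resp.\ $r-2$) downward moves on $[h(2),h(r-1)]$ (resp.\ $[h(2),h(r)]$); this is false in general ($TT$ may stay in row $1$ throughout), and the argument in fact needs the opposite inequality: since $TT$ is in row $1$ at column $h(2)$ and in row at most $r-2$ (resp.\ $r$) at column $h(r-1)$ (resp.\ $h(r)$), it makes \emph{at most} $r-3$ (resp.\ $r-1$) vertical moves there, hence at least $h(r-1)-h(2)-(r-3)$ (resp.\ $h(r)-h(2)-(r-1)$) single horizontal moves, and it is these that force the upward moves of $BT$. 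With that correction your arithmetic does reproduce the stated constants --- truncating part (i) at column $h(r-1)$ is an acceptable substitute for the paper's subtraction of the up to $h(r)-h(r-1)-2$ shared steps on the last block, as both give $h(r-1)-h(2)-r+3$ --- but the computation is only valid inside the final case of the induction, not as a stand-alone proof.
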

\begin{proof}
We will proceed by induction  on $r\ge3$.  Using the rules of  {Proposition} \ref{prop:chessboard},  one can check that the lemma holds for $r=3$.
Suppose that the result holds for $r-1$ and we show it for $r\ge4$. If $BT$ arrives at the $m$--th corner for $3\le m\le r-1$, the result follows by the induction hypothesis. Moreover, if $BT$ arrives at the $2$--th corner, the result follows.
Similarly, the result follows if $BT$ arrives at $a_{i,h(m)}$ for $2\le m\le r-1$ and $i\le m$.  Then, we may suppose that $BT$ always passes strictly below the $m$--th corner for every $2\le m\le r-1$.

Notice that $TT$ makes exactly $h(r)-h(2)$ horizontal movements and at most $r-1$ vertical movements, from right to left, to arrive at  $a_{1,h(2)}$. We know, by the rules of construction of $TT$ that for each vertical movement we must also count one horizontal movement. So, $TT$ makes at least $h(r)-h(2)-(r-1)$ single horizontal movements, from right to left, until   $a_{1,h(2)}$ is attained.

On the other hand, as $TT$ always passes strictly above all corners after  the $1$--st corner and $BT$ always passes strictly below the $m$--th corner for every $2\le m\le r-1$,  then $TT$ and $BT$  do not share steps from columns $h(2)$ to $h(r-1)$. However, $TT$ and $BT$ could share at most $h(r)-h(r-1)-2$ steps from columns $h(r-1)+1$ to $h(r)$ (see Figure \ref{figurehypothesis}).
\begin{figure}[htb]
\begin{center}
 \includegraphics[width=.35\textwidth]{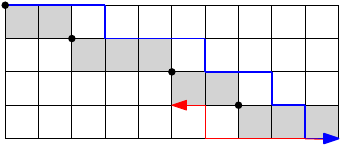}
 \caption{The chessboard $B[A]$ with sequence $(2,3,2,3)$ and $h(r)=2(r-1)+\lceil\frac{r}{2}\rceil$. The points represent the corners of $A=A_{5,h(5)}$. We observe that $TT$ and $BT$ share the final step.} \label{figurehypothesis}
 \end{center}
\end{figure}

Therefore, by the rules of  {Proposition} \ref{prop:chessboard},  for each single horizontal movement that $TT$ does not share with $BT$, $BT$ makes a vertical movement. So, $BT$ makes at least  $h(r)-h(2)-(r-1)-(h(r)-h(r-1)-2)=h(r-1)-h(2)-r+3$ vertical movements, from right to left, until   column $h(2)$ is attained. Hence,
 $BT$ arrives at $a_{i,h(2)}$ for some $i\le \max\{2,r-(h(r-1)-h(2)-r+3)\}$  concluding the first part of the proof.  If  $TT$ and $BT$ do not share steps from columns $h(2)$ to $h(r)$, then $BT$ makes at least  $h(r)-h(2)-(r-1)$ vertical movements, from right to left, until   column $h(2)$  is attained, concluding that
 $BT$ arrives at $a_{i,h(2)}$ for some $i\le \max\{2,r-(h(r)-h(2)-(r-1))\}$.
  \end{proof}

\begin{lemma}\label{lemmageneral2}Let  $A=A_{r,h(r)}$ be a matrix and suppose that  $TT$ always passes strictly above all corners after  the $1$--st corner. Then the following holds:
\begin{itemize}
  \item [(i)] If $B[A]$ has a sequence $(x_1,x_2,\ldots,x_{r-1})$, $x_i\ge2$  for odd $i$, $x_j\ge3$ for even $j$ and $h(m)=\sum\limits_{k=0}^{m-1}x_{k}+1$ for every $1\le m\le r$,   then $a_{1,1},a_{1,2}\in BT$ when $r\ge4$.  When $r=3$,  $a_{1,1}, a_{1,2}\in BT$, or column $h(r)$ is an interior element and $a_{2,1},a_{1,1}\in BT$.

  \item [(ii)]   If $B[A]$ has a sequence $(x_1,x_2,\ldots,x_{r-1})$, $x_i\ge3$  for odd $i$, $x_j\ge2$ for even $j$ and $h(m)=\sum\limits_{k=0}^{m-1}x_{k}+1$ for every $1\le m\le r$, then $a_{1,1},a_{1,2}\in BT$ when $r\ge3$.

  \item [(iii)] If $B[A]$ has a sequence $(2,t+3,2,t+1,t+1,\ldots,t+1)$ for some $t\ge2$,  $h(2)=t+3$, $h(3)=t+6$ and $h(m)=(t+1)(m-3)+7$ for $4\le m\le r$ for every  $4\le m\le r$, then $a_{i,t+3}\in BT$ for some $i\le 2$ when $r\ge4$ (see Figure \ref{cotageneral2}).

\item [(iv)]  If $B[A]$ has a sequence $(t+1,\ldots,t+1,2,t+3,2)$ for some $t\ge2$,  $h(2)=t+3$, 
 $h(r)=(t+1)(r-3)+7$ and $TT$ and $BT$ do not share steps from columns $h(2)$ to $h(r)$, then $a_{i,t+3}\in BT$ for some $i\le 2$ when $r\ge5$.

  \item [(v)] If  $B[A]$ has a sequence $(2,4,2,3,2,3,\ldots)$ and $h(m)=2(m-1)+\lceil\frac{m}{2}\rceil+1$ for every $2\le m\le r$,  then $a_{i,4}\in BT$ for some $i\le 2$ when $r\ge4$.

  \item [(vi)] If $r\ge6$  is even,
       $B[A]$ has a sequence $(2,3,2,3,\ldots,2,4,2)$, $h(2)=4$, $h(r)=2(r-1)+\lceil\frac{r}{2}\rceil+1$  and $TT$ and $BT$ do not share steps from columns $h(2)$ to $h(r)$, then $a_{i,4}\in BT$ for some $i\le 2$.
\end{itemize}
\end{lemma}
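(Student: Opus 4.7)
The plan is to deduce each of the six statements from Lemma \ref{lemmageneral1} by specializing to the prescribed chessboard sequence and formula for $h$, and then extending the path of BT through the leftmost block of the chessboard to reach the claimed endpoint. For parts (i), (ii), (iii), and (v) I would invoke Lemma \ref{lemmageneral1}(i); for parts (iv) and (vi) I would use Lemma \ref{lemmageneral1}(ii), exploiting the explicit no-sharing hypothesis available in those statements.

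The main computation, common to all parts, is to show that substituting the relevant values of $h(2)$, $h(r-1)$, $h(r)$ into the bound $\max\{2,\,2r-h(r-1)+h(2)-3\}$ (respectively $\max\{2,\,2r-h(r)+h(2)-1\}$) collapses to $2$. In parts (i) and (ii) this reduces to the inequality $\sum_{k=2}^{r-2}x_k \ge 2r-5$, which follows from the stated lower bounds $x_i\ge 2$ or $x_i\ge 3$ together with a short parity check on the number of even and odd indices. In parts (iii) and (iv) the tail of the sequence contributes terms of size $t+1\ge 3$, so the required inequality holds comfortably; parts (v) and (vi) are analogous, using $h(m)=2(m-1)+\lceil m/2\rceil+1$ and the sequence $(2,4,2,3,2,3,\ldots)$ or its mirror. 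Having established $a_{i,h(2)}\in BT$ with $i\le 2$, I would then use Proposition \ref{prop:chessboard}(b) to trace BT through the columns $1,\ldots,h(2)-1$: the only black squares in this block lie in row $1$ (forming the initial block of width $x_1\ge 2$), so the rules of the chessboard force BT to move horizontally in rows $1$ or $2$ without dropping further, which immediately places $a_{1,1}$ and $a_{1,2}$ in BT.

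The principal obstacle is the boundary case $r=3$ in part (i), where the inductive bound of Lemma \ref{lemmageneral1} degenerates because there is no corner strictly between the $1$-st and the final one; a direct chessboard analysis is required there, and the conclusion must bifurcate to allow the alternative ``column $h(r)$ is interior and $a_{2,1},a_{1,1}\in BT$'' when BT makes a vertical drop in the rightmost column. A secondary difficulty arises in parts (iv) and (vi), where the sequence is written with the large block of size $t+3$ (respectively $4$) near the right end rather than the left; here I would first interpret the no-sharing hypothesis geometrically before applying Lemma \ref{lemmageneral1}(ii), and the arithmetic of $h(r)-h(2)$ requires separate attention because the dominant block now sits near $h(r-1)$ rather than near $h(2)$, so the propagation step from $a_{i,h(2)}$ down to $a_{1,1},a_{1,2}$ still needs the initial block $x_1$ (which in (iv) and (vi) is again of size $\ge 2$, so the argument goes through after reindexing).
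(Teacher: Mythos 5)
Your proposal follows essentially the same route as the paper: each part is obtained by substituting the specific sequence and $h$-values into Lemma \ref{lemmageneral1}(i) (for (i), (ii), (iii), (v)) or \ref{lemmageneral1}(ii) (for (iv), (vi)), then propagating $BT$ through the leftmost block via Proposition \ref{prop:chessboard} to reach $a_{1,1},a_{1,2}$ in parts (i)--(ii), with the $r=3$ case of (i) handled by a direct bifurcating analysis exactly as in the paper. The only small inaccuracy is that in part (ii) the bound from Lemma \ref{lemmageneral1}(i) collapses to $3$, not $2$ (e.g.\ $r=4$ gives $5-\lceil r/2\rceil=3$), but since $h(2)=4$ and $a_{1,i}\in TT$ for $i\le 5$ your chessboard-propagation step still yields $a_{1,1},a_{1,2}\in BT$, as in the paper.
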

\begin{proof}
We prove (i) and (ii) for $B[A]$ with sequences  $(2,3,2,3\ldots)$ and  $(3,2,3,2\ldots)$, respectively, since the general case holds as a consequence.

  (i) By the sequence of $B[A]$, we observe that $h(m)=2(m-1)+\lceil\frac{m}{2}\rceil$ for $1\le m\le r$. Then, as $2r-h(r-1)+h(2)-3=4-\lceil\frac{r-1}{2}\rceil\le 2$ when $r\ge4$, we obtain by Lemma  \ref{lemmageneral1} (i) that  $a_{i,h(2)}\in BT$ for some $i\le2$. Since $a_{1,i}\in TT$ for $i\le4$ and $h(2)=3$, we conclude by the rules of  {Proposition} \ref{prop:chessboard} that $a_{1,1},a_{1,2}\in BT$.
  If $r=3$, one can check that $a_{1,1}, a_{1,2}\in BT$, or  column $6$ is an interior element and $a_{1,1}, a_{2,1}\in BT$ (see Figure \ref{r3}).
\begin{figure}[htb]
\begin{center}
 \includegraphics[width=.2\textwidth]{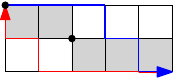}
 \caption{Case $r=3$ when column $6$ is an interior element and $a_{1,1}, a_{2,1}\in BT$.  The points represent the corners.} \label{r3}
 \end{center}
\end{figure}

 (ii) By the sequence of $B[A]$, we observe that  $h(m)=2(m-1)+\lceil\frac{m+1}{2}\rceil$ for $1\le m\le r$. Then, as $2r-h(r-1)+h(2)-3=5-\lceil\frac{r}{2}\rceil\le 3$, we obtain by Lemma  \ref{lemmageneral1} (i) that $a_{i,h(2)}\in BT$ for some  $i\le3$. Since $a_{1,i}\in TT$ for $i\le5$  and $h(2)=4$,  we conclude by the rules of  {Proposition} \ref{prop:chessboard} that $a_{1,1},a_{1,2}\in BT$.

 (iii)   As  $2r-h(r-1)+h(2)-3=2$ when $r=4$ and $2r-h(r-1)+h(2)-3=2r-(t+1)(r-4)-7+t\le 7-r\le2$ when  $r\ge5$ and $t\ge2$, we obtain by Lemma  \ref{lemmageneral1} (i) that  $a_{i,h(2)}\in BT$ for some $i\le 2$.

(iv) As $2r-h(r)+h(2)-1=2r-(t+1)(r-3)+t-5\le 6-r\le1$ when  $r\ge5$ and $t\ge2$, we obtain by Lemma  \ref{lemmageneral1} (ii) that  $a_{i,h(2)}\in BT$ for some $i\le 2$.

 (v)  As $2r-h(r-1)+h(2)-3=4-\lceil\frac{r-1}{2}\rceil\le2$ when $r\ge4$, we obtain by Lemma   \ref{lemmageneral1} (i) that  $a_{i,h(2)}\in BT$  for some $i\le 2$.

 (vi)  As $2r-h(r)+h(2)-1=2r-2(r-1)-\frac{r}{2}+2=4-\frac{r}{2}\le1$ when $r\ge6$ is even, we obtain by Lemma   \ref{lemmageneral1} (ii) that  $a_{i,h(2)}\in BT$  for some $i\le 2$.
\end{proof}

From now on,  denote as $A^+_m$ and $A^-_m$ the matrices $A^+_{m,h(m)}$ and $A^-_{m,h(m)}$, respectively (see Figure \ref{cotageneral2}).

We are now ready to tackle the case when $d\ge4$ and $t\ge2$.

 \begin{theorem}\label{cotageneral}
 $\bar{n}(t,d)<2d+(t-1)(d-2)+3$  for integers $d\ge4$ and $t\ge 2$.
    \end{theorem}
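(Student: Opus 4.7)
The plan is to construct a matrix $A=A_{r,n}$ with $r=d+1$ and $n=(t+1)(d-2)+7=2d+(t-1)(d-2)+3$, chosen so that the chessboard $B[A]$ realizes the sequence $(2,t+3,2,t+1,t+1,\ldots,t+1)$ (of length $r-1=d$), with corner positions $h(1)=1$, $h(2)=3$, $h(3)=t+6$, and $h(m)=(t+1)(m-3)+7$ for $4\le m\le r$. Note that $h(r)=n$, so $A$ is entirely pinned down up to column reorientations, and the associated LOM $M_A$ is uniform of rank $r=d+1$. By Proposition~\ref{prop:chessboard}(a), the chessboard is invariant under reorientations, so it suffices to show that for every plain travel $PT$ of $A$, the acyclic reorientation $\mathcal{A}$ in which $PT$ plays the role of the top travel has at least $t+1$ interior elements.

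Following the template of Theorems~\ref{dimension2} and \ref{dimension3}, I let $j$ be the smallest column index such that $j$ is not an interior element of $\mathcal{A}$ and neither $PT$ nor the bottom travel $BT$ makes a vertical movement at column $j$. If no such $j$ exists, then every non-interior column witnesses a vertical movement of $PT$ or $BT$. Since each travel makes at most $r-1=d$ vertical movements, the number of non-interior columns is at most $2d$, so the number of interior elements is at least
\[
n-2d=(t+1)(d-2)+7-2d=(t-1)(d-2)+5\ge t+1
\]
for $d\ge 4$, $t\ge 2$. This handles the easy case.

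When $j$ exists, by Proposition~\ref{prop:chessboard}(b) $PT$ makes a vertical movement at column $j+1$ and $BT$ at column $j-1$, and I split according to the behaviour of $PT$ relative to the corners. If $PT$ passes strictly above the $m$--th corner for every $2\le m\le r-1$, I apply Lemma~\ref{lemmageneral2}(iii) to conclude that $a_{i,t+3}\in BT$ for some $i\le 2$. This forces $PT$ and $BT$ to become parallel along most of the $(t+1)$-wide rightmost block: restricting to the submatrix $A^-_{\,\cdot\,,\,t+3}$ (of rank at least $2$ on the remaining $n-t-2=(t+1)(d-3)+5$ elements) and invoking Remark~\ref{dim1} inductively on the truncated travels yields at least $t+1$ interior elements. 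If instead $PT$ drops to or below some $m$--th corner with $2\le m\le r-1$, I take the first such corner and decompose the count into interior elements of $A^+_{m,j-1}$ (rank $m$ on $j-1$ elements) and of $A^-_{m',\,n-j}$ (rank $r-m'+1$ on $n-j$ elements) for the appropriate $m'$; both pieces inherit an acyclic reorientation and, by an inductive application on rank (base cases being Theorems~\ref{dimension2} and \ref{dimension3} and Remark~\ref{dim1}), their interior contributions sum to at least $t+1$.

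The main obstacle is the case analysis at column $j$: there are several subcases for the rows at which $PT$ and $BT$ arrive, and for each subcase one must verify that the split into $A^+$ and $A^-$ really transfers interior elements to $\mathcal{A}$ without double-counting the boundary, and that the arithmetic of $(t+1)(d-2)+7$ distributes correctly between the two parts. The role of Lemma~\ref{lemmageneral2}(iii) is precisely to close the most delicate subcase, where $PT$ behaves generically and the decomposition argument alone would lose too many elements at the boundary; the $h$-function was calibrated so that the bound $2r-h(r-1)+h(2)-3\le 2$ holds, giving $i\le 2$ in the conclusion of the lemma and hence the required saving of $t+1$ interior elements in the right-hand block.
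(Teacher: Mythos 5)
You have chosen the right object --- the same chessboard with sequence $(2,t+3,2,t+1,\ldots,t+1)$ and essentially the same corner function as the paper (up to the slip $h(2)=3$, whereas Lemma~\ref{lemmageneral2}(iii) requires the second corner at column $t+3$; the conclusion $a_{i,t+3}\in BT$ you quote depends on that placement) --- and you correctly identify Lemma~\ref{lemmageneral2}(iii) as the tool for the subcase where $PT$ stays above all corners after the first. But the core of the argument is not there. The ``smallest $j$'' device you import from Theorems~\ref{dimension2} and~\ref{dimension3} does not transfer to rank $d+1$: Proposition~\ref{prop:chessboard}(b) forces the vertical movements of $PT$ at column $j+1$ and of $BT$ at column $j-1$ only when there is exactly one black square between the two travels in the relevant pair of columns, which the very special rank-$3$ and rank-$4$ chessboards guarantee but the general one does not (the travels may have zero or several black squares between them, e.g.\ inside the wide $(t+3)$- and $(t+1)$-blocks). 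So the dichotomy you set up at column $j$ is not available, and the case analysis built on it collapses.

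More seriously, the case where $PT$ does meet some corner $m\ge 2$ is dispatched in one sentence (``their interior contributions sum to at least $t+1$''), which is exactly what has to be proved. One must show that $BT$ enters the submatrix $\mathcal{A}^+_m$ through $a_{m,h(m)},a_{m,h(m)+1}$, so that its restriction is the bottom travel of $\mathcal{A}^+_m$ and an induction on rank applies; this is what Lemma~\ref{lemmageneral2}(i)/(ii) applied to $\mathcal{A}^-_m$ delivers, with an exceptional branch when $\mathcal{A}^-_m$ has only three rows (then column $h(r)$ must be shown interior to compensate for losing column $h(m)$). And the hardest subcase, $m=r-1$ with $BT$ passing below every corner, is absent from your sketch entirely: there the $A^+/A^-$ decomposition alone does not reach $t+1$, and the paper has to derive a contradiction by turning the matrix upside down, redefining the second corner as $a_{m,h(m)-1}$, and invoking Lemma~\ref{lemmageneral2}(iv) (the no-shared-steps variant coming from Lemma~\ref{lemmageneral1}(ii)). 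Without that step the induction does not close. A minor point: in your ``easy case'' the count is $n-2d=(t-1)(d-2)+3$, not $+5$; the inequality $\ge t+1$ still holds for $d\ge4$, $t\ge2$.
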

\begin{proof}
Let $A=A_{r,h(r)}$  be a matrix  where $h(r)$ is defined as
$h(2)=t+3$, $h(3)=t+6$, $h(m)=(t+1)(m-3)+7$ for $4\le m\le r$ and  $B[A]$ with sequence $(2,t+3,2,t+1,t+1,\ldots,t+1)$ for $t\ge2$ (see Figure \ref{cotageneral2}).
 \begin{figure}[htb]
\begin{center}
 \includegraphics[width=.6\textwidth]{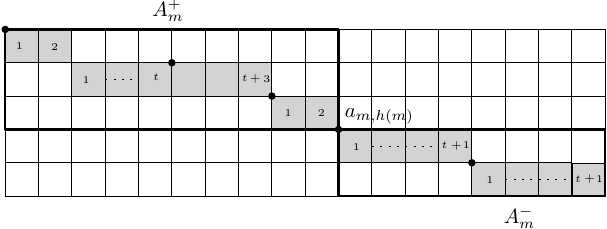}
\caption{A matrix $A=A_{6,h(6)}=A_{6,19}$ and the sub-matrices $A^+_4$ and $A^-_4$. The chessboard  $B[A]$ has sequence $(2,t+3,2,t+1,t+1)$ for $t=3$.   The points represent the corners of $A$ associated to the function $h(r)$ of Theorem \ref{cotageneral}.} \label{cotageneral2}\end{center}
\end{figure}
 We shall show  by induction on $r$ that for every $r\ge2$ and for any  plain travel $PT$ in $A$, the corresponding $\mathcal{A}$ in which $PT$ is the Top Travel  has at least $t+1$ interior elements. In particular, as $h(r)=(t+1)(d-2)+7=2d+(t-1)(d-2)+3$ for $r\ge5$, we will prove the theorem for $d\ge4$ and $t\ge2$.

We observe that $\mathcal{A}$  has  $t+1$ interior elements when $r=2$ since $\mathcal{A}$ is a $2\times (t+3)$ matrix (Remark \ref{dim1}).
 For $r=3$ and $4$, the result follows by Theorems \ref{dimension2} and \ref{dimension3}, respectively, since the  chessboards  considered in these theorems coincide with  $B[A]$. Thus, assume that the theorem holds for $r-1$ and we show it for $r\ge 5$. Suppose that  the $m$--st corner is the last corner that $PT$ meets in $A$, for some  $1\le m\le r-1$. If  $PT$ always passes strictly below the $i$--st corner for  $i>m$, then there would be at least $t+2$ interior elements in $\mathcal{A}$ (from columns $h(r-1)$ to $h(r)$). Hence,  we may suppose that $PT$ always passes strictly above the $i$--st corner for  $i>m$. We have the following cases.

\emph{Case $m\le r-3.$}
First suppose that $m=1$. Then $a_{i,t+3}\in BT$ for some $i\le 2$  by Lemma \ref{lemmageneral2} (iii), concluding by the rules of  {Proposition}  \ref{prop:chessboard} that there are at least $t+1$ interior elements in $\mathcal{A}$. Now suppose that $2\le m\le r-3.$
As $\mathcal{A}^-_m$  has at least $4$ rows, applying  Lemma \ref{lemmageneral2} (i)  (when $m=3$) and   Lemma \ref{lemmageneral2} (ii) (when $m\neq 3$) on sub-matrix $\mathcal{A}^-_m$, we obtain that  $a_{m,h(m)},a_{m,h(m)+1}\in BT$. Thus, the theorem holds  by induction  hypothesis on $\mathcal{A}^+_m$ since $BT$  restricted in  $\mathcal{A}^+_m$ is also the Bottom travel of $\mathcal{A}^+_m$ and each interior element of $\mathcal{A}^+_m$ is an interior element of $\mathcal{A}$.

\emph{Case $m=r-2.$}
As $\mathcal{A}^-_m$  has $3$ rows, $a_{m,h(m)},a_{m,h(m)+1}\in BT$, or  column $h(r)$ is an interior element and $a_{m+1,h(m)}, a_{m,h(m)}\in BT$ by Lemma \ref{lemmageneral2} (i). If $a_{m,h(m)},a_{m,h(m)+1}\in BT$, the theorem holds by induction hypothesis on $\mathcal{A}^+_m$. If column $h(r)$ is an interior element and $a_{m+1,h(m)}, a_{m,h(m)}\in BT$, notice that each interior element of $\mathcal{A}^+_m$ is an interior element of $\mathcal{A}$, except for column $h(m)$. Thus, $\mathcal{A}$ has at least $t$ interior elements from columns $1$ to $h(m)$ by induction hypothesis on $\mathcal{A}^+_m$ and one interior element in  column $h(r)$.

\emph{Case $m=r-1.$}
First suppose that $BT$ arrives at the $k$-th corner for some  $2\le k\le r-1$. As each interior element of $\mathcal{A}^+_k$ and $\mathcal{A}^-_m$ is an interior element of $\mathcal{A}$, except for (maybe) columns $h(k)$ and $h(m)$,  $\mathcal{A}$ has at least $t$ interior elements from columns $1$ to $h(k)$ by induction hypothesis on $\mathcal{A}^+_k$ and at least $t-1\ge 1$ interior elements from columns $h(m)$ to $h(r)$ by Remark \ref{dim1} (since $\mathcal{A}^-_m$ is a $2\times (t+2)$ matrix), concluding the proof in this case. Similarly, the proof holds if $BT$ arrives at $a_{i,h(k)}$ for $2\le k\le r-2$ and $i\le k$.
        Now suppose that $BT$ passes always below the $i$--st corner, for every $i\ge2$. In particular, as $BT$ does not arrives at the $m$-th corner, every interior element of $\mathcal{A}^-_m$  is an interior element of $\mathcal{A}$, concluding by Remark \ref{dim1} that $\mathcal{A}$ has $t$ interior elements from columns $h(m)+1$ to $h(r)$. So, we may suppose  that $TT$ and $BT$ do not share steps from columns $1$ to $h(2)$, otherwise the theorem holds.  Also, if $a_{m,h(m)-1}\in PT$, then column $h(m)$ is an interior element and the theorem holds. So, we may suppose that $a_{m,h(m)-1}\not\in PT$. Hence, $a_{r,h(m)-2}\in BT$ by the rules of  {Proposition} \ref{prop:chessboard}.
     Let $\mathcal{A}^{'}$ be the matrix obtained by turning the matrix $\mathcal{A}$ upside down. We observe that $BT$ and $PT$ are the Top and Bottom Travels of $\mathcal{A}^{'}$, respectively. Let  {us} define the $i$--st corners of $\mathcal{A}^{'}$ as $a_{r-i+1,h(r-i+1)}$ for $i\neq 2$ and define the
      $2$--st corner of $\mathcal{A}^{'}$ as $a_{m,h(m)-1}$.

     Notice that $BT$ always passes strictly above all corners of  $\mathcal{A}^{'}$ after the $1$--st corner of $\mathcal{A}^{'}$. Moreover, $B[A']$ has the same sequence and the same $2$--st corner as that considered in Lemma \ref{lemmageneral2} (iv). Hence, as $TT$ and $BT$ do not share steps from columns $h(1)$ to $h(m)-1$,
     we know  by Lemma \ref{lemmageneral2} (iv) that $a_{i,h(m)-1}\in PT$ for some $i\ge r-1$, but this is a contradiction since  we had assumed that $a_{m,h(m)-1}\not\in PT$ and clearly $a_{r,h(m)-1}\not\in PT$, concluding the proof.
\end{proof}
Figure \ref{contraexample1} (a) shows that the chessboard considered in Theorem \ref{cotageneral}  {cannot} be used to prove  $\bar{n}(d,1)<2d+(t-1)(d-2)+3$  for $d=4$ and $t=1$. In fact, this example can be generalized in order to show that this chessboard  {cannot} be used to prove   $\bar{n}(d,t)<2d+3$ for $d\ge4$.

\begin{theorem}\label{tpequeno}
 $\bar{n}(1,d)<2d+\lceil\frac{d+1}{2}\rceil+1$ for any integer $d\ge4$.
 \end{theorem}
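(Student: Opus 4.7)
The plan is to construct an explicit Lawrence oriented matroid $M_A$ realising the bound, by choosing a matrix $A = A_{r,h(r)}$ with $r = d+1$ such that $h(r) = 2d + \lceil (d+1)/2 \rceil + 1$ and then prove that every acyclic reorientation has at least $2$ interior elements. The natural candidate, suggested by Lemma \ref{lemmageneral2} parts (v) and (vi), is the chessboard $B[A]$ with sequence $(2,4,2,3,2,3,\ldots)$ and corners $h(m) = 2(m-1) + \lceil m/2 \rceil + 1$ for $2 \le m \le r$; indeed this gives precisely $h(r) = 2d + \lceil (d+1)/2 \rceil + 1$. The goal is to show by induction on $r \ge 3$ that for any plain travel $PT$ in $A$, the corresponding acyclic reorientation $\mathcal{A}$ has at least $2$ interior elements.

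The base cases $r = 3, 4$ (that is, $d = 2, 3$) reduce to Theorems \ref{dimension2} and \ref{dimension3}, after checking that the chessboard used here has no fewer interior elements than those considered there for $t = 1$. For the inductive step, mimicking Theorem \ref{cotageneral}, let $m$ be the largest index of a corner that $PT$ meets. In the case $m \le r-3$, we apply Lemma \ref{lemmageneral2}(i) or (ii) (according to the parity / position of $m$) to the submatrix $\mathcal{A}^-_m$ to force $a_{m,h(m)}, a_{m,h(m)+1} \in BT$, and then invoke the inductive hypothesis on $\mathcal{A}^+_m$, whose chessboard is of the same combinatorial type with smaller parameter. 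The case $m = r-2$ is handled analogously, with Lemma \ref{lemmageneral2}(i) allowing an interior element at the rightmost column $h(r)$ to compensate if $\mathcal{A}^+_m$ yields only one.

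The critical case is $m = r - 1$, where $PT$ passes strictly above all corners after the first. If $BT$ meets some corner at index $2 \le k \le r-1$ (or reaches some $a_{i,h(k)}$ with $i \le k$), splitting at $\mathcal{A}^+_k$ and $\mathcal{A}^-_m$ produces one interior element from each by the inductive hypothesis and Remark \ref{dim1} (since $\mathcal{A}^-_m$ is a rank-$2$ submatrix of width $\ge 4$). Otherwise $BT$ passes strictly below every corner with index $\ge 2$; Remark \ref{dim1} applied to $\mathcal{A}^-_m$ already contributes the needed interior elements unless $TT$ and $BT$ share steps inside $\mathcal{A}^-_m$, a contingency to be ruled out directly. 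Finally, if $TT$ and $BT$ do not share steps from columns $h(1)$ to $h(m)-1$ and $a_{m,h(m)-1} \notin PT$, we flip $\mathcal{A}$ upside down and apply Lemma \ref{lemmageneral2}(vi) to the reversed matrix $\mathcal{A}'$, whose sequence $(2,3,2,3,\ldots,2,4,2)$ and corner positions match the hypothesis, to force $a_{i, h(m)-1} \in PT$ for some $i \ge r-1$, contradicting the case assumption.

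The main obstacle I anticipate is reconciling Lemma \ref{lemmageneral2}(vi), which is stated for even $r \ge 6$, with the odd-$r$ instances needed when $d$ is even. I expect the fix is either to split the argument by the parity of $d$ and invoke a symmetric variant of Lemma \ref{lemmageneral2}(v) in the odd case, or to show that for odd $r$ the reversed chessboard inherits enough structure from Lemma \ref{lemmageneral1}(ii) directly (checking that $2r - h(r) + h(2) - 1 \le 1$ still holds with the alternate end-of-sequence pattern). The other place where care is required is the bookkeeping at columns $h(m)$ and $h(k)$: each of these columns might fail to be interior in the split even though it is interior in the original $\mathcal{A}$, so one must verify that the counts from $\mathcal{A}^+_k$ and $\mathcal{A}^-_m$ combine to at least $2$ without double-counting.
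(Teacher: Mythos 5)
Your proposal follows essentially the same route as the paper: the same chessboard with sequence $(2,4,2,3,2,3,\ldots)$ and corners $h(m)=2(m-1)+\lceil\frac{m}{2}\rceil+1$, the same induction on $r$ with base cases from Theorems \ref{dimension2} and \ref{dimension3}, the same case split on the last corner $m$ met by $PT$, and the same upside-down trick with Lemma \ref{lemmageneral2}(vi) in the terminal case. The parity obstacle you flag is real and is resolved exactly by your first suggestion, splitting on the parity of $r$: for odd $r$ the sequence ends in $(\ldots,2,3)$, so in the case $m=r-1$ a direct check of the tail shows that either $\mathcal{A}$ already has two interior elements or $a_{m,h(m)},a_{m,h(m)+1}\in BT$ (and one inducts on $\mathcal{A}^+_m$), with no appeal to a variant of Lemma \ref{lemmageneral2}(v); the flip-and-apply-(vi) argument is reserved for even $r\ge 6$, where it applies as stated. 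One small correction to your bookkeeping: in the even-$r$ subcase where $BT$ meets the $k$-th corner, $\mathcal{A}^-_{r-1}$ is only a $2\times 3$ matrix (not width $\ge 4$), so Remark \ref{dim1} yields exactly one interior element there, which together with the one from the inductive hypothesis on $\mathcal{A}^+_k$ still gives the required total of two.
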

\begin{proof}
Let $A=A_{r,h(r)}$  be a matrix  where  $h(r)$ is defined as  $h(m)=2(m-1)+\lceil\frac{m}{2}\rceil+1$ for every $2\le m\le r$ and $B[A]$ has sequence $(2,4,2,3,2,3,\ldots)$. We shall show  by induction on $r$ that for every $r\ge2$ and for any  plain travel $PT$ in $A$, the corresponding $\mathcal{A}$ in which $PT$ is the Top Travel  has at least $2$ interior elements. In particular, as $h(r)=2d+\lceil\frac{d+1}{2}\rceil+1$ for $r\ge5$, we will prove the theorem for $d\ge4$.

We observe that $\mathcal{A}$  has at least  $2$ interior elements when $r=2$ since $\mathcal{A}$ is a $2\times4$ matrix (Remark \ref{dim1}). For $r=3$ and $4$, the result follows by Theorems \ref{dimension2} and \ref{dimension3}, respectively (applying them for $t=1$), since the  chessboards  considered in these theorems coincide with  $B[A]$. Thus, assume the theorem holds for $r-1$ and we show it for $r\ge 5$. Suppose that  the $m$--st corner is the last corner that $PT$ meets in $A$, for some  $1\le m\le r-1$. If  $PT$ always passes strictly below the $i$--st corner for  $i>m$, then there would be at least $3$ interior elements in $\mathcal{A}$ (from columns $h(r-1)$ to $h(r)$). Hence,  we may suppose that $PT$ always passes strictly above the $i$--st corner for  $i>m$. If $m=1$, then $a_{i,4}\in BT$ for some $i\le 2$ by Lemma \ref{lemmageneral2} (v), concluding by the rules of  {Proposition}  \ref{prop:chessboard} that there are at least $2$ interior elements in $\mathcal{A}$. We omit the proof of the cases $2\le m\le r-2$, since they are analogous to those in the proof of Theorem \ref{cotageneral}. Now, consider the case $m=r-1$.

If $r$ is odd,  $B[A]$ has a sequence $(2,4,2,3,2,3,\ldots,2,3)$ and one can verify that there are at least $2$ interior elements in $\mathcal{A}$ or  $a_{m,h(m)},a_{m,h(m)+1} \in BT$. In both cases the theorem holds. Now suppose that $r$ is even and then $B[A]$ has a sequence $(2,4,2,3,2,3,\ldots,3,2)$.
If $a_{m,h(m)},a_{m,h(m)+1} \in BT$,  the result follows by induction hypothesis on $\mathcal{A}^+_m$. Then, suppose from now that the above does not hold. Hence, each interior element of $\mathcal{A}^-_m$   is an interior element of $\mathcal{A}$ concluding  by Remark \ref{dim1} that  $\mathcal{A}$ has one interior element from columns $h(m)+1$ to $h(r)$ (since $\mathcal{A}^-_m$ is a $2\times 3$ matrix). First suppose that $BT$ arrives at the $k$-th corner for some  $2\le k\le r-1$. As each interior element of $\mathcal{A}^+_k$  is an interior element of $\mathcal{A}$, except for (maybe) column $h(k)$, $\mathcal{A}$ has at least one interior element from columns $1$ to $h(k)$ by induction hypothesis on $\mathcal{A}^+_k$ and since  $\mathcal{A}$ has one interior element from columns $h(m)+1$ to $h(r)$, the theorem holds in this case. Similarly, the proof holds if $BT$ arrives at $a_{i,h(k)}$ for $2\le k\le r-2$ and $i\le k$. Now suppose that $BT$ passes always below the $i$--st corner, for every $i\ge2$. If $TT$ and $BT$ share steps from columns $1$ to $h(2)$, then $\mathcal{A}$  has at least one interior element from columns $1$ to $h(2)$ and since $\mathcal{A}$ has one interior element from columns $h(m)+1$ to $h(r)$, the theorem holds. Then, we may suppose that $TT$ and $BT$ does not share steps from columns $1$ to $h(2)$.  Also,  if $a_{m,h(m)-1}\in PT$, then column $h(m)$ is an interior element and the theorem holds. So, we may suppose that $a_{m,h(m)-1}\not\in PT$. Hence, $a_{r,h(m)-2}\in BT$ by the rules of  {Proposition}  \ref{prop:chessboard}. Let $\mathcal{A}^{'}$ be the matrix obtained by turning the matrix $\mathcal{A}$ upside down. We observe that $BT$ and $PT$ are the Top and Bottom Travels of $\mathcal{A}^{'}$, respectively.

Let define the $i$--st corners of $\mathcal{A}^{'}$ as $a_{r-i+1,h(r-i+1)}$ for $i\neq 2$ and define the
      $2$--st corner of $\mathcal{A}^{'}$ as $a_{m,h(m)-1}$.    Notice that $BT$ always passes strictly above all corners of  $\mathcal{A}^{'}$ after the $1$--st corner of $\mathcal{A}^{'}$. Moreover, $B[A']$ has the same sequence and the same $2$--st corner as that considered in Lemma \ref{lemmageneral2} (vi). Hence, as $TT$ and $BT$ do not share steps from columns $h(1)$ to $h(m)-1$, we know  by Lemma \ref{lemmageneral2} (vi) that $a_{i,h(m)-1}\in PT$ for some $i\ge r-1$, but this is a contradiction since  we had assumed that $a_{m,h(m)-1}\not\in PT$ and clearly $a_{r,h(m)-1}\not\in PT$, concluding the proof.
\end{proof}

The chessboard considered in Theorem \ref{cotageneral} {cannot} be extended to a chessboard with sequence $(2,t+3,2,3,2,3,\ldots)$ in order to prove $\bar{n}(d,t)<2d+\lceil\frac{d+1}{2}\rceil+t$. Figures \ref{contraexample1} (b) and (c) provide examples of this phenomena for $d=5, t=2$,
and for $d=4, t=3$, respectively. 
In fact, these examples can be generalized in order to
show that this chessboard  {cannot} be used to prove $\bar{n}(d,t)<2d+\lceil\frac{d+1}{2}\rceil+t$
for odd $d\ge5, t\ge2$ and for even $d\ge 4, t\ge3$.

The upper bound given in Theorem \ref{cotageneral} can be improved when $d$ is even.

\begin{theorem}\label{dpar}
 $\bar{n}(t,d)<2d+(t-1)\frac{d}{2}+3$  for any integer $d\ge4$, $d$-even and $t\ge 2$.
 \end{theorem}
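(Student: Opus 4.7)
The plan is to mirror the strategy of Theorem~\ref{cotageneral}, but to exploit the even parity of $d$ by choosing a chessboard sequence that alternates narrow and wide black blocks, thereby shortening the total width $h(r)$. Concretely, set $r=d+1$ and take $A=A_{r,h(r)}$ such that $B[A]$ has the sequence
\[
(x_1,x_2,\ldots,x_{r-1}) \;=\; \bigl(2,\,t+3,\,2,\,t+1,\,2,\,t+1,\ldots,2,\,t+1\bigr),
\]
where after the initial pair $(2,t+3)$ there are $(d-2)/2$ copies of the pair $(2,t+1)$; this is well defined because $d$ is even. Place the corners at $h(1)=1$, $h(2)=t+3$, $h(3)=t+5$, and $h(m)=h(m-2)+(t+3)$ for $4\le m\le r$. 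A direct count gives
\[
h(r)\;=\;3+\tfrac{d(t+3)}{2}\;=\;2d+(t-1)\tfrac{d}{2}+3,
\]
so it suffices to show that for every plane travel $PT$ in $A$, the acyclic reorientation $\mathcal{A}$ in which $PT$ is the top travel has at least $t+1$ interior elements.

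I would prove this by induction on $r$ (equivalently, on even $d\ge 4$). The base case $d=4$ is covered by Theorem~\ref{dimension3}, since for $r=5$ the relevant portion of $B[A]$ coincides with the chessboard treated there (with the appropriate $t$). For the inductive step, assume $PT$ passes strictly above all corners after the $m$-th one (otherwise the final block $[h(r-1),h(r)]$ already supplies $t+2$ interior elements) and split by the value of $m$, exactly as in the proof of Theorem~\ref{cotageneral}: when $m=1$, an analogue of Lemma~\ref{lemmageneral2}(iii) adapted to the new sequence forces $a_{i,t+3}\in BT$ for some $i\le 2$, yielding $t+1$ interior elements between columns $2$ and $t+3$; when $2\le m\le r-3$ one applies the analogue of Lemma~\ref{lemmageneral2}(i)--(ii) to $A^-_m$ so that $a_{m,h(m)},a_{m,h(m)+1}\in BT$, and then invokes the induction hypothesis on $A^+_m$; when $m=r-2$ one argues as in Theorem~\ref{cotageneral}, either $BT$ reaches the corner (induction on $A^+_m$) or column $h(r)$ is itself interior and supplies the missing element; when $m=r-1$ one applies the flip-the-matrix-upside-down trick from the end of Theorem~\ref{cotageneral}, using the analogue of Lemma~\ref{lemmageneral2}(iv) on $A'$ to derive a contradiction with $a_{m,h(m)-1}\notin PT$.

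The main obstacle is verifying the analogues of Lemma~\ref{lemmageneral2}(iii) and (iv) for the alternating sequence. One must check that the estimate $2r-h(r-1)+h(2)-3\le 2$ of Lemma~\ref{lemmageneral1}(i), and its non-overlap version $2r-h(r)+h(2)-1\le 1$ for Lemma~\ref{lemmageneral1}(ii), still hold with the new values $h(m)=h(m-2)+(t+3)$. Because $d$ is even and half of the block-widths have been shrunk from $t+1$ to $2$, these inequalities become parity-sensitive; the key observation is that inserting a narrow block of width $2$ between every two wide blocks of width $t+1$ saves exactly $(t-1)$ columns per pair of rows, which is precisely the slack needed so that $BT$ is forced into row $\le 2$ at column $h(2)$. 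Once the two lemma analogues are established, the induction goes through verbatim as in Theorem~\ref{cotageneral}, giving the announced bound $\bar n(d,t)<2d+(t-1)\frac{d}{2}+3$ for every even $d\ge 4$ and every $t\ge 2$.
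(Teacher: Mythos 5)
Your construction is essentially the one the paper uses in the Annex: the alternating sequence $(2,t+3,2,t+1,2,t+1,\ldots,2,t+1)$ with $r=d+1$ odd, total width $2d+(t-1)\frac{d}{2}+3$, induction on $r$, a case split on the last corner $m$ that $PT$ meets, and analogues of Lemma~\ref{lemmageneral2}(iii)--(iv) for the new corner function (these are exactly Lemma~\ref{lemmageneral3}, and your inequality checks $2r-h(r-1)+h(2)-3\le 2$ and $2r-h(r)+h(2)-1\le 1$ are the right ones). The genuine gap is your claim that, once those analogues are available, ``the induction goes through verbatim as in Theorem~\ref{cotageneral}.'' It does not, precisely because of the alternation you introduced: the parity of the last corner $m$ now matters. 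When $m$ is even with $4\le m\le r-3$, the submatrix $\mathcal{A}^+_m$ has an even number of rows and its chessboard sequence ends in a narrow block of width $2$, so it is \emph{not} an instance of the statement being proved by induction and the induction hypothesis cannot be invoked on it. The paper patches exactly this case by applying Theorem~\ref{tpequeno} together with Remark~\ref{obssequences} to $\mathcal{A}^+_m$, which yields only $2$ interior elements to the left of $h(m)$, and then recovering the missing $t-1$ interior elements from the columns between $h(m)$ and the next corner via Lemma~\ref{lemmageneral3}(ii) and a further case analysis on whether $TT$ and $BT$ share a step in that range. This extra argument is the substantive new content of the even-$d$ theorem, and your sketch does not supply it.

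Two smaller points. Your base case is off: Theorem~\ref{dimension3} treats $r=4$ (i.e.\ $d=3$) with sequence $(2,t+3,2)$, not $r=5$; the paper starts the induction at $r=3$ via Theorem~\ref{dimension2} and reaches $r=5$ only through the full inductive step. Also, your corner placement $h(3)=t+5$, $h(m)=h(m-2)+(t+3)$ is incompatible with the sequence (one needs $h(3)\ge x_1+x_2+1=t+6$) and does not reproduce your own claimed value of $h(r)$; the paper's choice is $h(2)=t+3$ and $h(m)=2\lceil\frac{m-1}{2}\rceil+(t+1)\lfloor\frac{m-1}{2}\rfloor+3$ for $3\le m\le r$.
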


This theorem can be proved by making one final tweak to the chessboard defined previously. The latter is a bit technical and requires some extra work in the same flavour as above. This will be done in the Annex.

\subsection{Proof of Theorem  \ref{boundsforf_o}}

\begin{proof}[Proof of Theorem  \ref{boundsforf_o}.] Recall that $n(t,d)= \bar{n}(t,d)$ and that $H_0(n(t,d),d)=n(t,d)-t$.

\begin{itemize}
\item $d=1, \ n\ge 2$. We clearly have that $H_0(n,1)=2$ since every convex set in dimension $1$ has as support only two vertices.

\item $d=2, \ n\ge 5$. By Equality \eqref{eq:n(2,5)}, we have $n(t,2)=5+t$ for any integer $t\ge 0$. Therefore, by Remark \ref{relationf_0andn(d,t)}, we have $H_0(t+5,2)=5$ for any integer $t\ge 0$ or, equivalently, $H_0(n,2)=5$ for any integer $n\ge 5$.

\item $d=3, \ n\ge 7$. By Theorem \ref{dimension3}, we have $n(t,3)\le 7+t$ for any integer $t\ge 0$. Therefore, by Remark \ref{relationf_0andn(d,t)}, we have $H_0(t+7,3)\le 7$ for any integer $t\ge 0$ or, equivalently, $H_0(n,3)\le 7$ for any integer $n\ge 7$.
\smallskip

\item $d\ge 2, \ n\le 2d+1$. By  Proposition \ref{easybounds}, $H_0(n,d)=n.$

\item $d\ge 4, \ n \ge 2d+\lceil\frac{d+1}{2}\rceil.$ By Proposition \ref{easybounds}, $H_0(n,d)< n.$

\item $d\ge 4, \ n\ge 2d+\lceil\frac{d+1}{2}\rceil+1.$ By Theorem \ref{tpequeno} $\bar{n}(1,d)< 2d+\lceil\frac{d+1}{2}\rceil+1$. Therefore, $H_0(n,d) < n-1.$

\item $d\ge 4,  \  2d+3+l(d-2) \leq n < 2d+3+(l+1)(d-2),$ $l\geq 1.$  By Theorem \ref{cotageneral}, $\bar{n}(d,t)<2d+(t-1)(d-2)+3$  for every $t\ge 2.$ Since $H_0(n(d,t),d)=\bar{n}(t,d)-t$ then, for a given $n$, it would be enough to work out $t$ such that $2d+(t-2)(d-2)+3 \leq {n}<2d+(t-1)(d-2)+3,$ in order to conclude $H_0(n,d) < n-t.$

It is not hard to see that by taking $t=\lfloor\frac{n-2d-3}{d-2}\rfloor+1\ge2$ we obtain that $H_0(n,d)\le n-(\lfloor\frac{n-2d-3}{d-2}\rfloor+2)$ for every $n\ge3d+1$. This can be expressed as; if $2d+3+l(d-2) \leq n < 2d+3+(l+1)(d-2)$ for some $l\ge1$, then  $H_0(n,d) \le n-(l+2)$.
\end{itemize}
\end{proof}

\begin{proof}[Proof of Corollary \ref{boundsforf_d-1}] The desired inequalities are obtained by combining inequalities \eqref{eq:upperbound} and \eqref{upperboundcyclicp} and the values and upper bounds given in Propositions \ref{easybounds} and \ref{easybounds1} and Theorem \ref{boundsforf_o}.

The lower bound is obtained by combining Equality \eqref{form-satcked} and Inequality \eqref{lowerboundHd-1}.


\end{proof}

\section{Minimal Radon partitions}\label{sec:RadonPart}

In order to prove Theorem \ref{lem:radon} we need to take a geometric detour on the relationship between faces of convex polytopes, simplices embracing the origin and Radon partitions. There is an old tradition of using Gale transforms to study facets of convex polytopes \cite{GRUN2003} by studying simplices embracing the origin. This equivalence was further extended by Larman \cite{LARMAN72} to studying Radon partitions of points in space.

A \emph{Gale transform} $\bar X$ of a finite set of points $X=\{x_{1}, \ldots,x_{n}\}\subset\R^d$ such that the dimension of their affine span is $r$ is defined as $\bar X =\{ \bar x_{j}= (\alpha_{j, 1}, \ldots \alpha_{j, n-r-1})\}_{j=1}^{n}$, where $\{a_{i}= (\alpha_{1, i},\ldots,\alpha_{n, i})\}_{i=1}^{n-r-1}$ is a basis of the $(n-r-1)$-dimensional space of \emph{affine dependences} of $X$, $D(X)=\{ \alpha=(\alpha_{1},\ldots,\alpha_{n}) | \; \sum_{i=1}^{n}\alpha_{i} x_{i}=0,\; \sum_{i=1}^{n}\alpha_{i}=0 \}$. It is emphasized that $\bar{X}$ is \emph{a} Gale transform of $X,$ rather than \emph{the} Gale transform of $X,$ because the resulting points depend on the specific choice of basis for $D(X).$ Still, different Gale transforms of the same set of points are linearly equivalent.\cite{GRUN2003}

A \emph{Gale diagram} $\hat{X}$ of $X$ is a set of points in $\mathbb{S}^{n-r-2}$ obtained by \emph{normalizing} a Gale transform, that is:
$\hat{X}=\{\hat{x_i}= \frac{\bar{x}_i}{\lVert \bar{x}_i \rVert} |  \bar{x}_i \in \bar{X}, \bar{x}_i\neq 0 \} \cup \{\hat{x_i}= \bar{x}_i |  \bar{x}_i \in \bar{X}, \bar{x}_i= 0 \}.$

\begin{remark} \label{rem:Gale} Let ${X}=\{{x_{1}}, \ldots, {x_{n}}\}$ be a set of $n$ points in $\mathbb{R}^{d}$ and let $\hat{X}$ (respectively. $\bar X$) be its Gale diagram (respectively. Gale transform), then the following statements hold.

\begin{enumerate}[(a)]
\item The $n$ points of $X$ are in general position in $\mathbb{R}^{d}$ if and only if the n-tuple $\hat{X}$ ($\bar X$)
consists of $n$ points in linearly general position in $\mathbb{R}^{n-d-1} .$
\item Faces of $conv(X)$ are in one-to-one correspondence with simplices of $\hat{X}$ ($\bar {X}$) that contain $0$ in their convex hull. More precisely, $Y \subset X$ is a face of $conv(X)$ iff $0 \in relint \: conv(\hat{X}\setminus \hat{Y})$ ($0 \in relint \: conv(\bar{X}\setminus \bar{Y})$).
\item $X$ is projectively equivalent to a set of points $Y$ (by a permissible projective transformation) if and only if there is a  {non-zero} vector $\epsilon=(\epsilon_{1},\ldots, \epsilon_{n}) \in \{1,-1\}^{n}$ ($\lambda=(\lambda_{1},\ldots, \lambda_{n}) \in \mathbb{R}^{n}$) such that $\hat{y}_i=\epsilon_i \hat{x}_i$ ($\bar{y}_i=\lambda_{i}\bar{x}_i$).
\end{enumerate}
We refer the reader to \cite{GRUN2003} for the proofs of this remark.
\end{remark}

Let $X=\{x_1,\dots,x_n\}\subset\R^d$ be a set of points in general position where $n\geq d+2$, and $X= \mathcal{A} \cup \mathcal{B}$ be a disjoint partition of $X$. We define a \emph{partitioned affine projection} (PAP) $\tilde{X}$ of $X$ into the unit $d$-sphere as follows; $\tilde{X}= \{\tilde{x}_i= \mathbf{I}(x_i) \frac{(x_i; 1)}{\lVert (x_i; 1) \rVert} | x_i \in X \} \subset \mathbb{S}^{d},$ where $\mathbf{I}(x_i)=1$ if $x_i \in A$,  $\mathbf{I}(x_i)=-1$ when $x_i \in B,$ and $(x_i; 1)$ is the $d+1$ dimensional vector whose first $d$ entries are identical to those of $x_i$ and last entry is $1$.

\begin{remark} \label{rem:Radon}
$A, B$ is a Radon partition of $X$ if and only if $0 \in conv(\tilde{A}\cup \tilde{B}).$
The proof follows straightforwardly, using linear algebra.
\end{remark}

\begin{proof}[Proof of Theorem  \ref{lem:radon}.]
By Remark \ref{rem:Radon}, we know that if we consider the PAP of $X$ into $\mathbb{S}^{d},$ $\tilde{X},$ we have that $conv(A\cap S) \cap conv(B \cap S) \neq \emptyset$  if and only if $0 \in conv( \tilde{S})$.

Let $\tilde{X}_\epsilon=\{\epsilon_1 \tilde{x}_1, \ldots, \epsilon_n \tilde{x}_n\}$ for $\epsilon=(\epsilon_1, \ldots, \epsilon_n) \in \{1, -1\}^n$. We  define $\rho(\tilde{X}_\epsilon)$  as the number of subsets $\tilde{S}\subset \tilde{X}_\epsilon$  such that $0 \in conv(\tilde{S})$, $\rho(\tilde{X})= \max_{\epsilon \in \{1, -1\}^n} {\rho(\tilde{X}_\epsilon)},$ and $\rho(n,d)=\min_{ \{\tilde{X} \subset \mathbb{S}^d, |\tilde{X}|=n\}}\rho(\tilde{X})$.

It easy to check that $$r(n,d)=\rho(n,d).$$ Notice that $\tilde{X} \subset \mathbb{S}^{d} \subset \mathbb{R}^{d+1}$ while $X\subset\R^d.$

Now, recall that the set $\tilde{X} \subset \mathbb{S}^{d}$ can be considered to be the Gale diagram of a set of points in $X' \subset \R^{n-d-2}$  where each  $\tilde{X}_\epsilon=\{\epsilon_1 \tilde{x}_1, \ldots, \epsilon_n \tilde{x}_n\}$ corresponds to a permissible projective transformation of $X'.$ Therefore, each $(d+2)$--subset $ \tilde{S} \subset \tilde{X}$ such that $0 \in conv( \tilde{S})$ is in one to one correspondence with a co-facet of $X'$ (i.e. the corresponding set $X'\setminus S'$ is a facet of $X').$ Hence, finding $\rho(n,d)$ is equivalent to finding $H_{d'-1}(n,d')$  where $d'=n-d-2.$
\end{proof}

By combining Theorems \ref{boundsforf_d-1} and \ref{lem:radon}, we obtain that for any $d\ge 1$,

{\scriptsize
\begin{eqnarray}\label{eq:bounds-for-r1}
r(n,d)\left\{\begin{array}{ll}
=2 & \text{ if } n= d+3,\\
=5 & \text{ if } n= d+4,\\
\le 10 & \text{ if } n= d+5,\\
\le f_{n-d-3}(C_{n-d-2}(n))& \text{ if }  n\ge 2d+3,\\
\le f_{n-d-3}(C_{n-d-2}(n-1)) & \text{ if }  n \leq \frac{5d+8}{3},\\
\le f_{n-d-3}(C_{n-d-2}(n-2)) & \text{ if } n \leq \frac{5d+6}{3},\\
\le  f_{n-d-3}(C_{n-d-2}(n-(l+2))),\ \  1 \leq l & \text{ if } d+4+ \frac{d-3}{l+2} <  n \leq d+4 +\frac{d-1}{l+1}.\\
\end{array}\right.
\end{eqnarray}
}
Moreover, by combining Theorem \ref{lem:radon} with Equations  \eqref{form-satcked}, \eqref{lowerboundHd-1} we have

$$r(n,d)=H_{d'-1}(n,d')\ge f_{d'-1}(P_{d'}(n))=(d'-1)n-(d'+1)(d'-2) \text{ if } n\le 2d'+1, \ d\ge 2$$

where $d'=n-d-2$.

We thus obtain that

\begin{eqnarray}\label{eq:bounds-for-r2}
r(n,d) \begin{array}{ll} \geq (n-d-3)n-(n-d-1)(n-d-4) &  \text{ if } n\ge 2d+3, \ d\ge 2.
\end{array}
\end{eqnarray}



Equations \eqref{eq:bounds-for-r1} and \eqref{eq:bounds-for-r2} yield to the following bounds in the case when $d=2$.




\begin{theorem}\label{equalityd=2} Let $n\geq 4$ be an integer. Then,
$$\begin{array}{rlll}
 & r(5,2) =2,\\
 & r(6,2) =5, \\
  & r(7,2) =10, \\
 2(2n-9)\le   & r(n,2) & \text{ if }  n\ge 7,\\
  & r(n,2) \leq 2\displaystyle\binom{\lfloor \frac{n}{2} \rfloor+2}{\lfloor \frac{n}{2}\rfloor-2} & \text{ if }  n\ge 7, n\text{-odd},\\
2\displaystyle\binom{\lceil \frac{n-1}{2}\rceil+1}{\lceil \frac{n-1}{2}\rceil-3} \leq & r(n,2) \leq \displaystyle \binom{ \frac{n}{2}+2}{ \frac{n}{2}-2} +\displaystyle\binom{\frac{n}{2}+1}{ \frac{n}{2}-3}& \text{ if }  n\ge 8, n\text{-even},\\
& r(n,2) =2\displaystyle\binom{\lceil \frac{n}{2} \rceil+1}{\lceil \frac{n}{2}\rceil-3}  &\text{ if }  n\ge 9, n\text{-odd}.
\end{array}$$
\end{theorem}

\begin{proof}
$\bullet$ If $n=5,6$ then the values are obtained directly from Equation \eqref{eq:bounds-for-r1}.

$\bullet$ If $n=7$ then from \eqref{eq:bounds-for-r1} we have $r(7,2)\le 10$ and from \eqref{eq:bounds-for-r2} we have $r(7,2)\ge (7-5)7-(7-3)(7-6)=14-4=10$, and the equality follows.

$\bullet$ The lower bound for $r(n,2)$ when $n\ge 7$ is a straightforward calculation from \eqref{eq:bounds-for-r2}.
\smallskip

By the forth inequality in \eqref{eq:bounds-for-r1}, we have $r(n,2)\le f_{n-5}(C_{n-4}(n))$ for any $n\ge 7$.
Now by taking $k=d-1$ in the formula \eqref{form-cyclic}, we have

\begin{eqnarray*}\label{form-cyclic1}
f_{d-1}(C_{d}(n))= \displaystyle\binom{n-\lceil \frac{d}{2}\rceil}{\lfloor \frac{d}{2}\rfloor} + \displaystyle\binom{n-\lfloor \frac{d}{2}\rfloor-1}{\lceil \frac{d}{2}\rceil -1}.
\end{eqnarray*}

Therefore, by taking $d=n-4$, we have
\begin{equation}\label{eq:Cn-4}
r(n, 2)\le  \binom{n-\lceil \frac{n-4}{2}\rceil}{\lfloor \frac{n-4}{2}\rfloor} + \displaystyle\binom{n-\lfloor \frac{n-4}{2}\rfloor-1}{\lceil \frac{n-4}{2}\rceil -1}.
\end{equation}

$\bullet$ The upper bounds for the cases $n\ge 8$, $n$-even and $n\ge 7$, $n$-odd are obtained from \eqref{eq:Cn-4}.

Recall that  $\nu(k,d)\ge d + \left\lceil \frac{d}{k} \right\rceil +1$ for $k\ge 2$ (see Equations \eqref{eq:boundsmcmullengen} and \eqref{eq:boundsmcmullengen1}). Therefore, by taking $k=\lfloor \frac{n-4}{2}\rfloor$ and $d=n-4$, we obtain that

\begin{eqnarray}\label{eq:v}
\nu\left(\left\lfloor \frac{n-4}{2} \right\rfloor,n-4\right) \geq \left\{
	\begin{array}{ll}
	n-1 & \text{ if } n-4\text{ even,}\\
	n & \text{ if } n-4\text{ odd,}\\
	\end{array} \right.
\end{eqnarray}	
for any integer $\lfloor \frac{n-4}{2}\rfloor\ge 2$, that is, for any $n\ge 9$, $n$-odd and $n\ge 8$, $n$-even.


$\bullet$ If $n\ge 9$, $n$-odd then Equation \eqref{eq:v} implies that $n$ points in $\mathbb{R}^{n-4}$ can always be mapped by a permissible projective transformation onto the vertices of a $\lfloor \frac{n-4}{2} \rfloor$--neighbourly polytope. Since $C_{n-4}(n)$ is  $\lfloor \frac{n-4}{2}\rfloor$-neighbourly then $r(n,2)=H_{n-5}(n,n-4)\ge f_{n-5}(C_{n-4}(n))$ and since $r(n,2)\le f_{n-5}(C_{n-4}(n))$ for any $n\ge 7$ then for $n\ge 9$, $n$-odd we have

$$\begin{array}{ll}
r(n,2) &=f_{n-5}(C_{n-4}(n))\\
& = \displaystyle\binom{n-\lceil \frac{n-4}{2}\rceil}{\lfloor \frac{n-4}{2}\rfloor} + \displaystyle\binom{n-\lfloor \frac{n-4}{2}\rfloor-1}{\lceil \frac{n-4}{2}\rceil -1} \ \ \ (\text{by}\ \eqref{form-cyclic1})\\
&= 2\displaystyle\binom{\lceil \frac{n}{2}\rceil +1}{\lceil \frac{n}{2}\rceil -3}.
\end{array}$$

$\bullet$ If $n\geq 8$, $n$-even then we may apply  \eqref{eq:v} for $n$ odd to the number $n-1$, the result will clearly be a lower bound for $r(n,2)$, that is
$$2\displaystyle\binom{\lceil \frac{n-1}{2}\rceil+1}{\lceil \frac{n-1}{2}\rceil-3} \leq  r(n,2).$$
\end{proof}

Other bounds for specific values of $n$ and $d$ can easily be obtained by using Equations \eqref{eq:bounds-for-r1} and \eqref{eq:bounds-for-r2}, for instance,

$$17\le r(9,3)\le 27.$$
\subsection{Pach and Szegedy's question}\label{sec:anapplication}

We may now prove Theorem \ref{thm:unbalanced}.

\begin{proof}[Proof of Theorem  \ref{thm:unbalanced}.] As a consequence of Theorem \ref{equalityd=2}, we clearly have that $r(n,2)$ is of order $o(n^4)$.

Let $A,B$ be a partition that attains the maximum number of induced minimal Radon partitions for the  set $X$, that is $r(X)=r(n,2)$; let $\tilde{X}_\epsilon \subset \mathbb{S}^2$ be it's corresponding PAP, and $X'\subset \mathbb{R}^{n-4}$ be a point configuration whose Gale diagram is $\tilde{X}_\epsilon$.

By combining Remark \ref{rem:Gale}(b) and Remark \ref{rem:Radon} we have that $X$ can have a partition that attains the maximum number of induced Radon partitions for a set of size $n$ if and only if $X'$ is in the projective class of a neighbourly polytope.


Let $n$ be odd. As explained in Theorem \ref{equalityd=2}, Equation \eqref{eq:v} implies that $n$ points in $\mathbb{R}^{n-4}$ can always be mapped by a permissible projective transformation onto the vertices of a $\lfloor \frac{n-4}{2} \rfloor$--neighbourly polytope. The latter implies that $X'$ is in the projective class of a neighbourly polytope. Using Remark \ref{rem:Gale}(b), the neighbourliness of $X'$ translates in $\tilde{X}_\epsilon \subset \mathbb{S}^2$ having the following property: for every subset $S \subset  \tilde{X}_\epsilon$ such that $|S|\leq \lfloor \frac{n-4}{2}\rfloor$, $0 \in conv(\tilde{X}_\epsilon \setminus S)$. Hence, no plane through the origin $H$ is such that $|H^+\cap \tilde{X}_\epsilon|\geq n-\lfloor\frac{n-4}{2}\rfloor$. Therefore, for all planes through the origin $H$, $|H^+\cap \tilde{X}_\epsilon|< n-\lfloor\frac{n-4}{2}\rfloor$. The latter directly implies that in $X$ both $|A|<\lfloor \frac{n}{2}\rfloor+2$ and $|B|<\lfloor \frac{n}{2}\rfloor+2$, as desired.

The even case follows in a similar fashion, by removing one point $x'$ and applying the calculations above to the remaining odd set of points $X \setminus x'$. We highlight that the resulting partition $A,B$ may not necessarily give a maximum number of induced minimal Radon partitions.
\end{proof}

\subsection{Tolerance result}\label{sec:tolerence}

\begin{proposition}\label{prop:nlambda} Let $t\ge 0$ and $d\ge 1$ be integers. Then,
$$n(t,d)=\max_{ m \in \mathbb{N}} \{ m \:| \: \lambda(t, m-d-1) \leq m\}$$
and
$$\lambda(t,d)=\min_{ m \in \mathbb{N}} \{ m \:| \:  m \leq n(t, m-d-1) \}.$$
\end{proposition}

\begin{proof}
Let $X$ be such that it has a partition into two sets $A$,$B$ and a subset $P\subseteq X$ of cardinality  $\mu-i$, for some $0\le i\le t$, such that $conv(A\setminus y)\cap conv(B\setminus y)\neq\emptyset$ for every  $y\in P$ and $conv(A\setminus y)\cap conv(B\setminus y)=\emptyset$ for every  $y\in X\setminus P$. By Remark \ref{rem:Radon}, we know that if we consider the PAP of $X$ into $\mathbb{S}^{d},$ $\tilde{X},$ we have that $conv(A\setminus y) \cap conv(B \setminus y) \neq \emptyset$  if and only if $0 \in conv( (\tilde{A}\setminus \tilde{y}) \cup (\tilde{B} \setminus \tilde{y}))$.

Now let $\rho(t, d)$ be  the smallest number such that for all sets $\tilde{X}$ of cardinality  $\rho$ in $\mathbb{S}^{d},$ there exists a partition of $\tilde{X}$ into two sets $\tilde{A}$,$\tilde{B}$ and a subset $\tilde{P}\subseteq \tilde{X}$ of cardinality  $\rho-i$, for some $0\le i\le t$, such that  $0 \in conv( (\tilde{A}\setminus \tilde{y}) \cup (\tilde{B} \setminus \tilde{y}))$ for every  $\tilde{y}\in \tilde{P}$ and  $0 \not \in conv( (\tilde{A}\setminus \tilde{y}) \cup (\tilde{B} \setminus \tilde{y}))$ for every  $\tilde{y}\in \tilde{X}\setminus \tilde{P}$, then $\lambda(t,d) = \rho(t,d)$. That is, one can seamlessly go from a tolerant partition to a \emph{tolerant} configuration of points in the sphere.

For the next part we will need to establish a relationship between $n(t, d)$ and $ \rho(t,d)$. This relationship arises from the connection between projective transformations of points and antipodal functions of their Gale diagrams, as has already been explored in Theorem \ref{lem:radon}.

Let $y$ be a point strictly in the interior of $conv(X)$. Recall that if we consider the Gale diagram of $X$, $\hat X \subset \mathbb{S}^{n-d-1}$, by Remark \ref{rem:Gale}.(b), as $p$ is not a face of $conv(X)$, $ 0 \not \in conv(\hat{X} \setminus \hat y)$. Also Remark \ref{rem:Gale}.(c) draws the connection between projective transformations of $X$ and taking diametrically opposite points in $\hat X.$

Thus, if we consider the Gale diagram of a set of $n=n(t,d)$ points $\hat X$, we must have that for some
$\hat{X_\epsilon}=\{\epsilon_1 \hat{x}_1, \ldots, \epsilon_n \hat{x}_n\}$ for
$\epsilon=(\epsilon_1, \ldots, \epsilon_n) \in \{1, -1\}^n$,
there is a set of at most $n-i$ points, for some $0 \leq i \leq t$, $\hat{P_\epsilon}$ such that $0 \in conv( \hat{X}_{\epsilon} \setminus \hat{y})$ for $\hat{y} \in \hat{P}_{\epsilon}$.
Thus $\rho(t, n-d-1) \leq n$, and the necessary partition is given by the signs of the epsilons.

Conversely, let $\hat{X}$ be a set of points $\rho = \rho(t,d')$ points, then the Gale transform of these points, $X$ will be such that there is a set of at most $t$ points, such that they are in the interior of $conv(X)$. This is $\rho \leq n(t, \rho-d'-1)$.

As argued at the beginning of the proof, in both inequalities we can straight forwardly substitute $\rho$ for $\lambda$ obtaining
$$n(t,d)=\max_{ m \in \mathbb{N}} \{ m \:| \: \lambda(t, m-d-1) \leq m\} \text{ and } \lambda(t,d)=\min_{ m \in \mathbb{N}} \{ m \:| \:  m \leq n(t, m-d-1) \}.$$
 as desired.
\end{proof}

This proposition can be considered as a generalization of a result due to Larman \cite{LARMAN72} obtained when $t=0$.

\section{Arrangements of (pseudo)hyperplanes}\label{sec:concluding-rem}
The so-called Topological Representation Theorem, due to Folkman and Lawrence \cite{FOLKMAN1978199}, states that loop-free oriented matroids of rank $d+1$ on $n$ elements (up to isomorphism) are in one-to-one correspondence with arrangements of pseudo-hyperplanes in the projective space $\mathbb{P}^{r-1}$ (up to topological equivalence).


A $d$-arrangement of $n$ pseudo-hyperplanes is called {\em simple} if $n\ge d$  and every intersection of $d$ pseudo-hyperplanes is a unique distinct point. It is known that simple arrangements correspond to uniform oriented matroids. It is well known that a tope corresponds to an acyclic reorientation (projective transformations) having as interior elements precisely those pseudo-hyperplanes not bordering the tope.

By the above discussion, we may redefine $\bar n(t,d)$ in terms of hyperplane arrangements:

\begin{quote}
$\bar n(t,d):=$ the  largest integer $n$ such that any simple arrangement of $n$ (pseudo)hyperplanes in $\mathbb{P}^d$ contains a tope of size at least $m-t$.
\end{quote}

\begin{proposition}\label{corolowerboundvd=2}
Every simple arrangement of at least $5$ pseudo-lines in $\mathbb{P}^2$ has a tope of size at least $5$, that is,
\begin{equation*}
\hbox{$5+t\le {\bar n}(t,2)$ for every integer $t\ge 0$.}
\end{equation*}
\end{proposition}

\begin{proof}
The proof is by induction on the set of $n$ (pseudo) lines. By Equation \eqref{eq:boundsmcmullengen}, any arrangement of $5$ (pseudo) lines in $\mathbb{P}^2$ has a tope of size $5$ and thus the proposition holds for $n=5$. We suppose the result true for $n'<n$ and will prove that any arrangement $H$ of $n\ge6$ (pseudo) lines in $\mathbb{P}^2$ has a tope of size at least $5$.  Let $l\in H$, then by induction $H\setminus {l}$ has a tope $T$ of size at least $5$ in $\mathbb{P}^2$. If $l$ does not touch $T$ then $T$ is a tope of $H$ of size at least $5$ in $\mathbb{P}^2$. Otherwise, $l$ divides $T$ into two topes, and since $H$ is simple then one of these two topes is of size at least $5$.
\end{proof}

Combining Proposition \ref{corolowerboundvd=2} and Theorem \ref{dimension2} for $d+2$, we obtain:
\begin{equation}\label{eq:n(2,5)}
\bar{n}(t,2)=5+t \text{ for any integer } t\ge 0.
\end{equation}


For the case $d=3$, Theorem \ref{dimension3} implies that $\bar{n}(t,3)\le 7+t$ for any integer $t\ge 1$, that is, for any $n\ge 7$ there exists a simple arrangement of $n$ (pseudo)planes in $\mathbb{P}^3$ with every tope of size at most $7$. This supports the following:

\begin{conjecture}\label{questionv(t,3)=t+7}  $\bar{n}(t,3)= 7+t$ for any integer $t\ge 1$.
\end{conjecture}
Furthermore, we may ask the following general questions:

\begin{question}\label{conjeturav(d,t)=2d+t} Let $d\ge2$ and $t\ge 0$ be integers.
Is it true that  $\bar{n}(t,d)=2d+1+t?$  In other words, is it true that any simple arrangement of $n\ge 2d+1$ (pseudo)hyperplanes in $\mathbb{P}^d$ contains a tope of size at least $2d+1$
 and conversely, for any $n\ge 2d+1$ there exists a  simple arrangement of $n$ (pseudo)hyperplanes in $\mathbb{P}^d$ with every tope of size at most $2d+1$ ?
\end{question}
Or, alternatively,

\begin{question}\label{conjeturac(d)} Let $d\ge2$ and $t\ge 0$ be integers.
 Is there a constant $c(d)\ge 1$ such that $\bar{n}(t,d)=2d+1+c(d)t$ ?
\end{question}

\bibliographystyle{amsplain}
\bibliography{bib_number_facets}
\pagebreak
\section*{Annex}\label{annex}
We shall prove Theorem \ref{dpar}. First, we need the following.

\begin{lemma}\label{lemmageneral3}Let $A=A_{r,h(r)}$ be a matrix and suppose that  $TT$ always passes strictly above all corners after  the $1$--st corner. Then the following holds:
\begin{itemize}
 \item [(i)] If $r\ge5$ is odd, $B[A]$ has a sequence $(2,t+3,2,t+1,2,t+1,\ldots,2,t+1)$ for some $t\ge2$, $h(2)=t+3$ and $h(m)=2\lceil\frac{m-1}{2}\rceil+(t+1)\lfloor\frac{m-1}{2}\rfloor+3$ for every $3\le m\le r$, then $a_{i,t+3}\in BT$ for some $i\le 2$. 

 \item [(ii)]  If $r\ge4$ is even, $B[A]$ has a sequence $(t+1,2,t+1,2,\ldots,t+1,2,t+1)$ for some $t\ge2$ and $h(r)=2\lfloor\frac{m-1}{2}\rfloor+(t+1)\lceil\frac{m-1}{2}\rceil+1$ for every $1\le m\le r$, then $a_{1,t}\in BT$. Moreover, if $TT$ and $BT$ do not share steps from columns $h(2)$ to $h(r)$, then  $a_{1,t+1}\in BT$.

\item [(iii)] If $r\ge5$ is odd, $B[A]$ has a sequence $(t+1,2,t+1,2,t+1\ldots,2,t+3,2)$ for some $t\ge2$,  $h(2)=t+3$, $h(r)=2\lfloor\frac{m-1}{2}\rfloor+(t+1)\lceil\frac{m-1}{2}\rceil+1$ for every $3\le m\le r$ and $TT$ and $BT$ do not share steps from columns $h(2)$ to $h(r)$, then $a_{i,t+3}\in BT$ for some $i\le 2$.
\end{itemize}
\end{lemma}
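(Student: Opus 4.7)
The proof of Lemma \ref{lemmageneral3} will follow the template used in the proof of Lemma \ref{lemmageneral2}: in each of the three parts one applies Lemma \ref{lemmageneral1} (version (i) when only the hypothesis on $TT$ is available, version (ii) when the non-sharing hypothesis is given) to bound the row at which $BT$ reaches column $h(2)$, and, whenever the target entry lies strictly to the left of $h(2)$, one then propagates this information leftward using Property \ref{prop:chessboard}(b). The key structural observation used throughout the propagation is that, since $TT$ passes strictly above the $2$nd corner $a_{2,h(2)}$ and cannot move upward, $TT$ is forced to lie in row $1$ in every column of $\{1,\ldots,h(2)\}$; consequently, each black square $s(1,j)$ in the first block lies strictly between $TT$ and $BT$ whenever $BT$ is at row $\ge 2$ in that column range.

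For part (i), the conclusion $a_{i,t+3}\in BT$ with $i\le 2$ is exactly the form of the output of Lemma \ref{lemmageneral1}(i), so the proof reduces to the arithmetic check $2r-h(r-1)+h(2)-3\le 2$. Substituting the given formulas and using that $r$ is odd yields $h(r-1)=(r-1)+\frac{(t+1)(r-3)}{2}+3$, whence the quantity simplifies to $r+t-2-\frac{(t+1)(r-3)}{2}$. This equals $2$ exactly at $r=5$ and is strictly decreasing in $r$ for $t\ge 2$, since the coefficient of $r$ is $1-\frac{t+1}{2}\le -\frac{1}{2}$. Hence the bound holds for every odd $r\ge 5$.

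For parts (ii) and (iii), the chessboard's first block has $s(1,j)$ black for $j=1,\ldots,t+1$, and the target entry lies one or two columns to the left of $h(2)$. The plan is: (1) apply Lemma \ref{lemmageneral1}(i) (or (ii) when the non-sharing hypothesis holds) to bound the row reached by $BT$ at column $h(2)$; (2) walk $BT$ leftward through the first block, using Property \ref{prop:chessboard}(b) to observe that in each pair of consecutive columns $(j,j+1)$ with $s(1,j)$ black and $BT$ still at row $\ge 2$, the unique black square between $TT$ (in row $1$) and $BT$ forces $BT$ to make a vertical movement. A careful column-by-column accounting then shows that the bound produced in step (1) leaves exactly enough room for $BT$ to reach row $1$ at the prescribed column. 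In particular, for part (iii) the required arithmetic inequality is $2r-h(r)+h(2)-1\le 2$, which one checks for $r\ge 5$ odd and $t\ge 2$ in the same style as in part (i); for the strengthened conclusion $a_{1,t+1}\in BT$ in part (ii) the non-sharing hypothesis upgrades the bound by one row, which translates into $BT$ reaching row $1$ one column further to the right.

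The main obstacle will be the propagation bookkeeping in (ii) and (iii): one has to track the upward movements of $BT$ across the first block of width $t+1$ and verify that the raw bound from Lemma \ref{lemmageneral1} is precisely tight, so that $BT$ lands on row $1$ exactly at the column claimed. The small-$r$ endpoints, especially $r=4$ in part (ii) (where $2r-h(r-1)+h(2)-3=3$ rather than $\le 2$), require a short separate case analysis using Property \ref{prop:chessboard}(b) directly on the three columns $h(2)-2,h(2)-1,h(2)$ to rule out the configurations in which $BT$ remains below row $1$ at column $t$.
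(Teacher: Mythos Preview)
Your approach is exactly the paper's: each part is an application of Lemma~\ref{lemmageneral1} followed (when the target column lies left of $h(2)$) by leftward propagation via Property~\ref{prop:chessboard}(b), using that $TT$ sits in row~$1$ throughout the first block. Two small corrections will make the write-up cleaner and match the paper's argument precisely.

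First, part~(iii) needs no propagation at all. With the sequence $(t+1,2,\ldots,t+3,2)$ and $h(2)=t+3$, the target column $t+3$ \emph{is} $h(2)$, so Lemma~\ref{lemmageneral1}(ii) already delivers $a_{i,h(2)}\in BT$ with $i\le\max\{2,2r-h(r)+h(2)-1\}$; the arithmetic gives $2r-h(r)+h(2)-1\le\frac{7-r}{2}\le 1$ for odd $r\ge 5$, hence $i\le 2$ and you are done. Your grouping of (iii) with (ii) as ``target one or two columns left of $h(2)$'' is inaccurate here.

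Second, there is no separate case needed at $r=4$ in part~(ii). The bound from Lemma~\ref{lemmageneral1}(i) is $i\le\max\{2,2r-h(r-1)+h(2)-3\}\le 3$ for \emph{every} even $r\ge 4$ (with equality at $r=4$), and two steps of the propagation through the black squares $s(1,t)$ and $s(1,t+1)$ uniformly carry $BT$ from row~$\le 3$ at column $h(2)=t+2$ down to row~$1$ at column~$t$. Under the non-sharing hypothesis Lemma~\ref{lemmageneral1}(ii) improves the bound to $i\le 2$, and a single propagation step gives $a_{1,t+1}\in BT$. So the ``careful column-by-column accounting'' you describe is exactly the uniform argument; no ad~hoc analysis of $r=4$ is required.
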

\begin{proof}
 (i) As $2r-h(r-1)+h(2)-3=2r-(r-1+\frac{(t+1)(r-3)}{2}+3)+t=\frac{2r+2t-4-(t+1)(r-3)}{2}\le \frac{9-r}{2}\le2$ when $r\ge5$ is odd and $t\ge2$. Then   $a_{i,h(2)}\in BT$  for some $i\le 2$ by Lemma   \ref{lemmageneral1} (i).

(ii) As $2r-h(r-1)+h(2)-3=2r-(r-2+(t+1)\frac{(r-2)}{2}+1)+t-1\le 5-\frac{r}{2}\le3$ when $r\ge4$ is even and $t\ge2$, we obtain by Lemma   \ref{lemmageneral1} (i) that  $a_{i,h(2)}\in BT$  for some $i\le 3$.  Then, as $a_{1,i}\in PT$ for $i\le t+3$, we conclude by the rules of  {Proposition}  \ref{prop:chessboard} that $a_{1,t}\in BT$. If $TT$ and $BT$ do not share steps from columns $h(2)$ to $h(r)$, as   $2r-h(r)+h(2)-1=2r-(r-2+ (t+1)\frac{(r)}{2}+1)+t+1\le4-\frac{(r)}{2}\le2$ when $r\ge4$ is even and $t\ge2$, we obtain by Lemma   \ref{lemmageneral1} (i) that  $a_{i,h(2)}\in BT$  for some $i\le 2$.  Then, as $a_{1,i}\in PT$ for $i\le t+3$, we conclude by the rules of  {Proposition} \ref{prop:chessboard} that $a_{1,t+1}\in BT$.

(iv) As $2r-h(r)+h(2)-1=2r-(r-1+\frac{(t+1)(r-1)}{2}+3)+t+2=\frac{2r+2t-(t+1)(r-1)}{2}\le \frac{7-r}{2}\le1$ when $r\ge5$ is odd and $t\ge2$. Then  $a_{i,h(2)}\in BT$  for some $i\le 2$ by Lemma \ref{lemmageneral1} (ii).
\end{proof}

We will use the following remark.

\begin{remark}\label{obssequences}
Let $B[A_1]$ and $B[A_2]$  be with sequences $(x_1,x_2,\ldots,x_{r-1})$ and $(y_1,y_2,\ldots,y_{r-1})$,  $y_i\ge x_i$, $1\le i\le r-1$, respectively. If for any  plain travel $PT$ in $A_1$, the corresponding $\mathcal{A}_1$ in which $PT$ is the Top Travel  has $k$ interior elements, then for any  plain travel $PT$ in $A_2$, the corresponding $\mathcal{A}_2$ in which $PT$ is the Top Travel  has $k$ interior elements.
\end{remark}

 We may now prove Theorem \ref{dpar}.

\begin{proof}[Proof of Theorem \ref{dpar}]
Let $A=A_{r,h(r)}$  be a matrix  where $h(r)$ is defined as 
$h(2)=t+3$, $h(m)=2\lceil\frac{m-1}{2}\rceil+(t+1)\lfloor\frac{m-1}{2}\rfloor+3$ for every $3\le m\le r$
 and  $B[A]$ has sequence $(2,t+3,2,t+1,2,t+1,\ldots,2,t+1)$ for $t\ge2$. We shall show  by induction on $r$ that for odd $r\ge3$ and for any  plain travel $PT$ in $A$, the corresponding $\mathcal{A}$ in which $PT$ is the Top Travel  has at least $t+1$ interior elements. In particular, as $h(r)=r-1+(t+1)\frac{r-1}{2}+3=2d+(t-1)\frac{d}{2}+3$ for odd $r\ge5$, we will prove the theorem for $d\ge4$ and $t\ge2$.

 For $r=3$, the result follows by Theorem \ref{dimension2}, then assume that the theorem holds for $r-1$ and we show it for odd $r\ge 5$. Suppose that  the $m$--st corner is the last corner that $PT$ meets in $A$, for some  $1\le m\le r-1$.  If  $PT$ always passes strictly below the $i$--st corner for  $i>m$, then there would be at least $t+2$ interior elements in $\mathcal{A}$ (from columns $h(r-1)$ to $h(r)$). Hence,  we may suppose that $PT$ always passes strictly above the $i$--st corner for  $i>m$.

The case $m=1$ holds by  Lemma \ref{lemmageneral3} (i). The case $m=2$ holds applying Lemma \ref{lemmageneral2} (ii) on sub-matrix $\mathcal{A}^-_m$ and then Remark \ref{dim1}. The case $m=3$ holds applying Lemma \ref{lemmageneral2} (i) on sub-matrix $\mathcal{A}^-_m$ and then induction hypothesis on $\mathcal{A}^+_m$. We have the following cases.

 \emph{Case $m$ odd and $5\le m\le r-2$ .} Applying  Lemma \ref{lemmageneral2} (i) on $\mathcal{A}^-_m$, $a_{m,h(m)},a_{m,h(m)+1}\in BT$, or  column $h(r)$ is an interior element and $a_{m+1,h(m)}, a_{m,h(m)}\in BT$.  If $a_{m,h(m)},a_{m,h(m)+1}\in BT$, the theorem holds by induction hypothesis on $\mathcal{A}^+_m$. If column $h(r)$ is an interior element and $a_{m+1,h(m)}, a_{m,h(m)}\in BT$, each interior element of $\mathcal{A}^+_m$ is an interior element of $\mathcal{A}$, except for column $h(m)$, then $\mathcal{A}$ has at least $t$ interior elements from columns $1$ to $h(m)$ by induction hypothesis on $\mathcal{A}^+_m$ and one interior element in  column $h(r)$.

\emph{Case $m$ even and $4\le m\le r-3$.}
As $\mathcal{A}^-_m$  has at least $4$ rows,  $a_{m,h(m)},a_{m,h(m)+1}\in BT$ by  Lemma \ref{lemmageneral2} (ii). Then, as each interior element of $\mathcal{A}^+_m$ is an interior element of $\mathcal{A}$, applying Theorem \ref{tpequeno} and Remark \ref{obssequences} to $\mathcal{A}^+_m$, we obtain that $\mathcal{A}$ has at least $2$ interior elements from columns $1$ to $h(m)$.
Suppose first that  $TT$ and $BT$ do not share steps from columns $h(2)$ to $h(r)$. Then, by Lemma \ref{lemmageneral3} (ii), $a_{m,h(m)+t}\in BT$, concluding that  $\mathcal{A}$ has $t-1$ interior elements from columns $h(m)+1$ to $h(m)+t-1$.  Now, suppose that  $TT$ and $BT$ share at least one step. Then,  by Lemma \ref{lemmageneral3} (ii), $a_{m,h(m)+t-1}\in BT$, concluding that  $\mathcal{A}$ has $t-2$ interior elements from columns $h(m)+1$ to $h(m)+t-2$. By the election of $m$,  $TT$ and $BT$ share at least one step from column $h(m)+1$ to $h(r)$, say  $a_{i,j},a_{i,j+1}$ for some $i\le r$ and some $h(m)+1\le j\le h(r)$. If $i=r$, then $\mathcal{A}$ has one interior element in column $j+1$ concluding the proof in this case, then we may suppose that $i<r$. Hence,  the squares in between of $TT$ and $BT$  after column $h(m)$ must be white, concluding by the rules of  {Proposition} \ref{prop:chessboard} that $a_{m,h(m)+t+1}\in BT$ and the theorem holds also in this case.

\emph{Case $m=r-1.$}
First suppose that $BT$ arrives at the $k$-th corner for some  $2\le k\le r-1$. As each interior element of $\mathcal{A}^+_k$ and $\mathcal{A}^-_m$
     is an interior element of $\mathcal{A}$, except for (maybe) columns $h(k)$ and $h(m)$,  $\mathcal{A}$ has at least $t$ interior elements from columns $1$ to $h(k)$ by induction hypothesis on $\mathcal{A}^+_k$ and at least $t-1\ge 1$ interior elements from columns $h(m)$ to $h(r)$ by Remark \ref{dim1} (since $\mathcal{A}^-_m$ is a $2\times (t+2)$ matrix), concluding the proof of this case. Similarly, the proof holds if $BT$ arrives at $a_{i,h(k)}$ for $2\le k\le r-2$ and $i\le k$.
        Now suppose that $BT$ passes always below the $i$--st corner, for every $i\ge2$. In particular, as $BT$ does not arrives at the $m$-th corner, every interior element of $\mathcal{A}^-_m$  is an interior element of $\mathcal{A}$, concluding by Remark \ref{dim1} that $\mathcal{A}$ has $t$ interior elements from columns $h(m)+1$ to $h(r)$. So, we may suppose  that $TT$ and $BT$ do not share steps from columns $1$ to $h(2)$, otherwise the theorem holds.  Also, if $a_{m,h(m)-1}\in PT$, then column $h(m)$ is an interior element and the theorem holds. So, we may suppose that $a_{m,h(m)-1}\not\in PT$. Hence, $a_{r,h(m)-2}\in BT$ by the rules of  {Proposition} \ref{prop:chessboard}.
     Let $\mathcal{A}^{'}$ be the matrix obtained by turning the matrix $\mathcal{A}$ upside down. We observe that $BT$ and $PT$ are the Top and Bottom Travels of $\mathcal{A}^{'}$, respectively. Let define the $i$--st corners of $\mathcal{A}^{'}$ as $a_{r-i+1,h(r-i+1)}$ for $i\neq 2$ and define the
      $2$--st corner of $\mathcal{A}^{'}$ as $a_{m,h(m)-1}$.
     Notice that $BT$ always passes strictly above all corners of  $\mathcal{A}^{'}$ after the $1$--st corner of $\mathcal{A}^{'}$. Moreover, $B[A']$ has the same sequence and the same $2$--st corner as that considered in Lemma \ref{lemmageneral3} (iii). Hence, as $TT$ and $BT$ do not share steps from columns $h(1)$ to $h(m)-1$, we know  by Lemma \ref{lemmageneral3} (iii) that $a_{i,h(m)-1}\in PT$ for some $i\ge r-1$, but this is a contradiction since  we had assumed that $a_{m,h(m)-1}\not\in PT$ and clearly $a_{r,h(m)-1}\not\in PT$, concluding the proof.
\end{proof}

The chessboard considered in Theorem \ref{dpar}  {cannot} be used to prove  $\bar{n}(d,1)<2d+(t-1)\frac{d}{2}+3$   for odd $d$. Figure \ref{contraexample1} (b)  gives an example for $d=5$ and $t=2$,  
This example can be generalized in order to show that this kind of chessboard  {cannot} be used to prove $\bar{n}(d,1)<2d+(t-1)\frac{d}{2}+3$
for odd $d\ge5$ and $t\ge2$.

 \begin{figure}[htb]
\begin{center}
 \includegraphics[width=.9\textwidth]{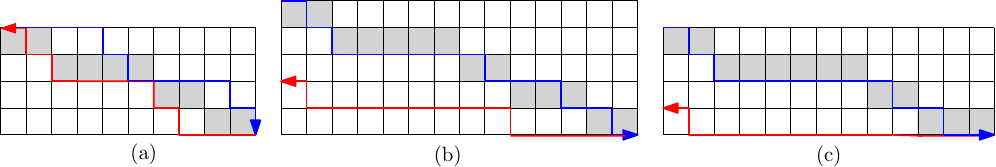}
 \caption{Figures (a), (b) and (c) show the matrices $A_{5,11}$, $A_{6,15}$ and $A_{5,14}$   respectively,
 with chessboards  $(2,t+3,2,2)$ for $t=1$ (a), $(2,t+3,2,3,2)$ for $t=2$ (b) and  $(2,t+3,2,3)$ for $t=3$ (c). We observe that for the pair of Top and Bottom travels described in these matrices, $A_{5,11}$ has only one interior element (column $1$), $A_{6,15}$ has only two interior elements (columns $13$ and $15$) and $A_{5,14}$  has only three interior elements (columns $11,13$ and $14$).}\label{contraexample1}
 \end{center}
\end{figure}

\end{document}